\numberwithin{equation}{section}
\newcommand{\bigpare}[1]{\bigl(#1\bigr)}
\newcommand{\biggpare}[1]{\biggl(#1\biggr)}
\newcommand{\Bigbra}[1]{\Bigl\{#1\Bigr\}}
\newcommand{\bigbrac}[1]{\bigl[#1\bigr]}
\newcommand{\bigset}[2]{\bigl\{#1\bigm|#2\bigr\}}
\newcommand{\Bignorm}[1]{\Bigl\| #1 \Bigr\|}
\newcommand{\bigabs}[1]{\bigl| #1 \bigr|}
\newcommand{\jap}[1]{\langle #1 \rangle}
\def\a{\alpha}
\def\b{\beta}
\def\d{\delta}
\def\e{\varepsilon}
\def\f{\varphi}
\def\g{\psi}
\def\i{\mbox{\raisebox{.5ex}{$\chi$}}}
\def\l{\lambda}
\def\m{\mu}
\def\s{\sigma}
\def\x{\xi}
\def\y{\eta}
\renewcommand{\L}{\Lambda}
\renewcommand{\O}{\Omega}
\renewcommand{\S}{\Sigma}
\newcommand{\OPS}{O\!P\!S}
\newcommand{\Op}{\mathrm{Op}}
\newcommand{\esssupp}{\mathrm{ess}\text{-}\mathrm{supp}}
\newcommand{\torus}{\mathbb{T}}
\def\re{\mathbb{R}}
\def\co{\mathbb{C}}
\def\ze{\mathbb{Z}}
\def\na{\mathbb{N}}
\def\pa{\partial}
\renewcommand{\Im}{\mathrm{Im}\,}
\newcommand{\supp}{\mathrm{{supp}}}
\newcommand{\dto}{\downarrow}
\DeclareMathOperator*{\slim}{s-lim}
\newcommand{\Ran}{\mathrm{Ran\;}}
\newcommand{\WF}{\mathrm{WF}}
\newtheorem{thm}{Theorem}[section]
\newtheorem{lem}[thm]{Lemma}
\newtheorem{prop}[thm]{Proposition}
\newtheorem{cor}[thm]{Corollary}
\theoremstyle{definition}
\newtheorem{ass}{Assumption}
\newtheorem{example}{Example}
\theoremstyle{remark}
\newtheorem{rem}{Remark}[section]
\numberwithin{equation}{section}
\title{Microlocal properties of scattering matrices\footnote{2010 Mathematics Subject Classification 
58J50, 35P25, 81U05.}}
\author{
Shu N{\sc akamura}%
\footnote{
Graduate School of Mathematical Science, 
University of Tokyo, Tokyo, Japan, 
Email: {\tt shu@ms.u-tokyo.ac.jp}. Partially supported by JSPS Grant Kiban (A) 21244008.}}
\begin{document}
\maketitle

\begin{abstract}
We consider scattering theory for a pair of operators $H_0$ and $H=H_0+V$ on $L^2(M,m)$, 
where $M$ is a Riemannian manifold, $H_0$ is a multiplication operator on $M$ and 
$V$ is a pseudodifferential operator of order $-\m$, $\m>1$. We show that a time-dependent 
scattering theory can be constructed, and the scattering matrix is a pseudodifferential 
operator on each energy surface. Moreover, the principal symbol of the scattering matrix is given 
by a Born approximation type function. The main motivation of the study comes from applications to 
discrete Schr\"odigner operators, but it also applies to various differential operators with 
constant coefficients and short-range perturbations on Euclidean spaces. 
\end{abstract}

%%%%%%%%%%%%%%%%%%%%%%%%%%%%%%%%%%%%%%%%%
%%%%%%%%%%%%  Section 1  %%%%%%%%%%%%%%%%
%%%%%%%%%%%%%%%%%%%%%%%%%%%%%%%%%%%%%%%%%

\section{Introduction}

Let $M$ be a smooth $d$-dimensional complete Riemannian manifold with a smooth density $m$, and let
$p_0(\x)$, $\x\in M$, be a real-valued smooth function on $M$.  We define 
\[
H_0\f(\x)= p_0(\x)\f(\x), \quad \f\in D(H_0)=\bigset{\f\in L^2(M,m)}{p_0\f\in L^2}
\]
be the multiplication operator by $p_0$ on $\mathcal{H}=L^2(M,m)$. 
It is easy to see that $H_0$ is self-adjoint. Let $\tilde V=V(-D_\x,\x)$ be a pseudodifferential operator 
with a symbol $V\in S^{-\m}_{1,0}(M)$ with $\m>1$. We suppose $\tilde V$ is an $H_0$-bounded self-adjoint 
operator on $\mathcal{H}$, and hence the principal symbol of $\tilde V$ may be supposed to be real-valued. 
We write $\tilde V$ and $V$ by the same symbol for simplicity. 
We set
\[
H= H_0+V\quad \text{on } \mathcal{H},
\]
and $H$ is self-adjoint with $D(H)=D(H_0)$. We write 
\[
v(\x)= dp_0(\x)\in T^*M
\quad\text{and}\quad 
M_0=\bigset{\x\in M}{v(\x)\neq 0}. 
\]
Let $I$ be a compact interval and we assume 
\[
p_0^{-1}(I) =\bigset{\x\in M}{p_0(\x)\in I} \subset M_0, 
\]
and $p_0^{-1}(I)$ is compact. We now consider the scattering theory for the pair $(H, H_0)$ on the 
energy interval $I$, i.e., we study the absolutely continuous spectrum of
$H$ on $I$. We denote the spectral projection of an operator $A$ on $J\subset \re$ by 
$E_J(A)$. Then the wave operators
\[
W^I_\pm =\slim_{t\to\pm\infty} e^{itH} e^{-itH_0} E_I(H_0)
\]
exist and they are complete: $\Ran W^I_\pm =E_I(H)\mathcal{H}_{ac}(H)$. Moreover, 
the point spectrum $\s(H)\cap I$ is finite including the multiplicities (see Section~2). 

We write the energy surface of $H_0$ with an energy $\l\in I$ by 
\[
\Sigma_\l=p_0^{-1}(\{\l\}) =\bigset{\x\in M}{p_0(\x)=\l}.
\]
$\Sigma_\l$ is a regular submanifold in $M$, and we let $m_\l$ be the smooth density on $\Sigma_\l$ 
characterized as follows: $m_\l=i^* \tilde m_\l$, where $\tilde m_\l\in\bigwedge^{d-1}(M)$ such that 
$\tilde m_\l\wedge dp_0= m$, and $i\,:\, \Sigma_\l\hookrightarrow M$ is the embedding. 
(Note $m_\l$ is uniquely determined whereas $\tilde m_\l$ is not.) 
The scattering operator is defined by 
$S^I= (W^I_+)^* W^I_-$, $\mathcal{H}\to \mathcal{H}$, 
and it commutes with $H_0$. Hence $S^I$ is decomposed to a family of operators $\{S(\l)\}_{\l\in I}$, 
where $S(\l)$ is a unitary operator on $L^2(\Sigma_\l,m_\l)$ for a.e. $\l\in I$. $S(\l)$ 
is called the scattering matrix (see Section~5 for the detail). 
Our main result is the following: 

\begin{thm}\label{thm-main}
Under the above assumptions, $S(\l)$ is a pseudodifferential operator with its symbol 
in $S^{0}_{1,0}(\S_\l)$ for each $\l\in I\setminus \s_p(H)$. Moreover, 
\[
\mathrm{Sym}(S(\l))= e^{-i\g(x,\x)} +R(x,\x),
\]
where $\mathrm{Sym}(A)$ denotes the symbol of $A$, 
\begin{equation}\label{eq-Born}
\g(x,\x)= \int_{-\infty}^\infty V(x+tv(\x),\x) dt
\quad \text{for } \x\in\Sigma_\l, x\in T^*_\x\Sigma_\l, 
\end{equation}
and $R\in S^{-\m}_{1,0}(\Sigma_\l)$. 
\end{thm}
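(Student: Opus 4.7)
The plan is to adapt the Isozaki--Kitada modifier technique. The key simplification in the present setting is that $H_0$ is a multiplication operator, so the classical flow of $p_0$ on $T^*M$ is the straight-line motion $(\x,x)\mapsto(\x,x+tv(\x))$ with $\x$ preserved, and the intertwining operators degenerate from Fourier integral operators to pseudodifferential operators. My aim is to construct PDOs $J_\pm$ of order~$0$ whose principal symbols are $e^{i\g_\pm(x,\x)}$ with
\[
\g_\pm(x,\x)=\int_0^{\pm\infty}V(x+tv(\x),\x)\,dt,
\]
and to prove $W_\pm E_I(H_0)-J_\pm E_I(H_0)\in\Op(S^{-\m}_{1,0})$. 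The PDO product formula will then yield that $S=W_+^*W_-$ has principal symbol $e^{-i\g_+}\cdot e^{i\g_-}=e^{-i\g}$, where $\g=\g_+-\g_-=\int_{-\infty}^\infty V(x+tv(\x),\x)\,dt$; this is exactly~\eqref{eq-Born}.

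To begin, the integrals defining $\g_\pm$ converge and lie in $S^{1-\m}_{1,0}$ on a neighbourhood of $p_0^{-1}(I)$. This uses $\m>1$ together with the uniform lower bound $\abs{v(\x)}\geq c>0$ on the compact set $p_0^{-1}(I)\subset M_0$, plus differentiation under the integral to control derivatives. Choosing $\chi\in C_c^\infty(M)$ equal to $1$ on $p_0^{-1}(I)$ and supported in a slightly larger compact subset of $M_0$, I set $J_\pm=\Op(\chi(\x)\,e^{i\g_\pm(x,\x)})\in\Op(S^0_{1,0})$. The eikonal identity
\[
v(\x)\cdot\pa_x\g_\pm(x,\x)=\int_0^{\pm\infty}\frac{d}{dt}V(x+tv(\x),\x)\,dt=-V(x,\x),
\]
together with the standard principal-symbol formula for $[\Op(p_0(\x)),\Op(a)]$, shows that the principal symbols of $[H_0,J_\pm]$ and $VJ_\pm$ both belong to $S^{-\m}$ and cancel, so that $HJ_\pm-J_\pm H_0\in\Op(S^{-\m-1}_{1,0})$ modulo terms supported away from $p_0^{-1}(I)$.

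Cook's method then compares $J_\pm$ with the genuine wave operators. The derivative of $e^{itH}J_\pm e^{-itH_0}E_I(H_0)$ equals $ie^{itH}(HJ_\pm-J_\pm H_0)e^{-itH_0}E_I(H_0)$; its norm on a dense set of states decays like $\langle t\rangle^{-\m-1}$ by a standard non-stationary-phase argument exploiting that the free propagator translates the $x$-support at a speed bounded below on $p_0^{-1}(I)$. Since this decay is integrable, $\slim_{t\to\pm\infty}e^{itH}J_\pm e^{-itH_0}E_I(H_0)$ exists; by the completeness result of Section~2 it agrees with $W_\pm E_I(H_0)$. Reading the integral representation of the difference through the identity $\int_0^{\pm\infty}e^{itH_0}\Op(a)e^{-itH_0}dt=\Op\bigpare{\int_0^{\pm\infty}a(x+tv(\x),\x)\,dt}$, which sends $a\in S^{-\m-1}$ into $S^{-\m}$, yields $W_\pm E_I(H_0)-J_\pm E_I(H_0)\in\Op(S^{-\m}_{1,0})$.

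Finally, the PDO product formula gives $J_+^*J_-$ with principal symbol $\chi(\x)^2 e^{-i\g(x,\x)}$ modulo $S^{-\m}$. The phase $\g$ is invariant under $x\mapsto x+sv(\x)$ (translate the integration variable), so it descends to a well-defined function on $T^*\S_\l$ for each $\l$. The direct integral decomposition $E_I(H_0)\mathcal H\simeq\int_I^\oplus L^2(\S_\l,m_\l)\,d\l$ diagonalizes $H_0$, and any operator commuting with $H_0$ modulo $\Op(S^{-\m})$ fibres as a family of PDOs on the $\S_\l$ whose symbols are the fibre restrictions of the original symbol; applied to $S$ this yields $\Sym(S(\l))=e^{-i\g}+R$ with $R\in S^{-\m}_{1,0}(\S_\l)$. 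The main technical obstacle I foresee is the third step: converting the $L^1$-in-$t$ convergence from Cook's method into the sharp statement that $W_\pm-J_\pm$ is a PDO of order~$-\m$, which requires both the integrated-symbol identity above and a careful verification that the PDO calculus on $M$ is compatible with the fibre restriction to each $\S_\l$ and its density~$m_\l$.
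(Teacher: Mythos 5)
The overall architecture you envision --- Isozaki--Kitada modifiers with phase $\g_\pm$, product giving $e^{-i\g}$, then fibring to $\Sigma_\lambda$ --- matches the paper's outline, and your construction of $J_\pm$ and verification that $HJ_\pm-J_\pm H_0$ drops order are essentially correct. However, the bridge from $J_\pm$ to $W_\pm$ is where your argument has a genuine gap.

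The key defect is in the step that claims $W_\pm E_I(H_0)-J_\pm E_I(H_0)\in\Op(S^{-\m}_{1,0})$. Cook's method gives
\[
W_\pm E_I(H_0)-J_\pm E_I(H_0)=i\int_0^{\pm\infty}e^{itH}\,G_\pm\,e^{-itH_0}E_I(H_0)\,dt,\qquad G_\pm=HJ_\pm-J_\pm H_0,
\]
and the integrand carries $e^{itH}$, not $e^{itH_0}$. The conjugation identity you invoke, $\int_0^{\pm\infty}e^{itH_0}\Op(a)e^{-itH_0}\,dt=\Op\bigpare{\int_0^{\pm\infty}a(x+tv(\x),\x)\,dt}$, applies only with the free propagator on \emph{both} sides, so it does not read off the structure of the Cook integral. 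If you replace $e^{itH}$ by $e^{itH_0}$ plus remainder and iterate, you generate a Born series for the resolvent, and controlling it precisely in the PDO calculus is the whole point of the microlocal resolvent estimates that the paper proves in Section~4 and that your proposal never invokes. Without them (or some equivalent propagation estimate), the assertion that $W_\pm-J_\pm$ is a PDO of order $-\m$ is unsupported, and in fact $W_\pm-J_\pm$ is not a PDO at all; only certain microlocally cut-off compositions involving the resolvent become smoothing.

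The paper instead avoids ever needing $W_\pm$ to be close to a PDO. It uses the stationary representation (Proposition~5.3),
\[
S(\l)=-2\pi i\,T(\l)J_+^*G_-T(\l)^*+2\pi i\,T(\l)G_+^*(H-\l-i0)^{-1}G_-T(\l)^*,
\]
so that the main term is manifestly a restriction of a PDO, computed by the trace-operator calculus of Lemma~5.4, and the remainder term is shown to be smoothing via the microlocal resolvent estimates (Theorem~4.2/Corollary~4.3), exploiting that $G_\pm$ is essentially supported in outgoing/incoming cones. Your last paragraph also glosses over the fibring: an operator commuting with $H_0$ modulo $S^{-\m}$ does not fibre by ``symbol restriction to $\S_\l$''; the correct recipe is $T(\l)\Op(a)T(\l)^*$, whose symbol is the integral of $a$ over the normal line with a $(2\pi)^{-1}$ factor (Lemma~5.4), and making this match the claimed principal symbol $e^{-i\g}$ requires the explicit computation with the cutoff $Y=\y(-\cos(x,v(\x)))$ carried out in Section~5, not a soft commutation argument.

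In short: the modifier construction and the target formula are right, but the passage from modifiers to the scattering matrix needs the stationary representation plus the microlocal resolvent estimates, neither of which can be bypassed by the Cook-method-plus-restriction argument as written.
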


We note, in the right hand side of \eqref{eq-Born}, we identify $T^*_\x\Sigma_\l$ 
with a subspace of $T^*_\x M$ using the Riemannian metric. We also note 
$\g\in S^{-\m+1}_{1,0}(\Sigma_\l)$, and by the Taylor expansion, we have 
\[
e^{-i\g(x,\x)} =1-i\g(x,\x) +r(x,\x), \quad r\in S^{-2(\m-1)}_{1,0}(\Sigma_\l). 
\]
The first two terms in the right hand side corresponds to the classical Born approximation for the 
scattering matrix. 

\bigskip
The scattering matrix is one of the central objects in the scattering theory, and a large amount 
of effort has been devoted to the investigation, mostly for Schr\"odinger operators. 
Chapter~8 of Yafaev's textbook  \cite{Y2} is 
an excellent reference on this subject. However, the microlocal properties of the scattering matrix 
seem to have attracted not much attention. One of the pioneering works is a series of 
papers by Isozaki and Kitada \cite{IK1,IK2,IK3,IK4}, and they proved the off-diagonal smoothness 
of the scattering matrix using the so-called microlocal resolvent estimates. 
Yafaev used microlocal methods to study high energy asymptotics of the scattering matrix
\cite{Y1}. In these works, they have not given explicit representation of the symbol as in 
Theorem~\ref{thm-main}. For the scattering theory on scattering manifolds, Melrose and 
Zworski \cite{MZ} showed that the scattering matrices are Fourier integral operators 
(see also Ito and Nakamura \cite{IN} for a generalization). 

Recently, Bulger and Pushnitski have employed a sort of hybrid of the microlocal and 
the functional analytic methods to obtain spectral asymptotics of the scattering matrix 
(\cite{BP1,BP2}). In this paper we obtain analogous result for fixed energies using 
the standard pseudodifferential operator calculus on manifolds. We also mention closely related 
result on the spectral asymptotics by Birman and Yafaev \cite{BY}, of which our result 
may be considered as a refinement and a generalization, if we combine our result with the 
Weyl formula. 

One of the motivations of this work comes from applications to the scattering theory for 
discrete Schr\"odinger operators (see, e.g., Boutet de Monvel, and Sahbani \cite{BS}, 
Isozaki and Korotyaev \cite{IKo} and references therein). 
We can apply microlocal methods to the scattering theory of discrete Schr\"odinger 
operators by considering it as a problem on the Fourier space $\mathbb{T}^d$. 
In particular, we can show that the scattering matrix is a pseudodifferential 
operator on the energy surface embedded in the torus, provided the energy is non critical. 

\bigskip
We prepare estimates on the boundary value of resolvents, usually called the limiting 
absorption principle, using the Mourre theory in Section~2. In Section~3, we construct 
Isozaki-Kitada modifiers for our model. In Section~4 we prove microlocal resolvent 
estimates, and combining them we prove Theorem~\ref{thm-main} in Section~5. We generally follow 
the theory of Isozaki and Kitada \cite{IK1, IK2, IK3,IK4}, but with a somewhat different point of view. 
We discuss applications to operators on $\re^d$ in Section~6, and then applications to 
discrete Schr\"odinger operators in Section~7. 

In this paper, we employ slightly nonstandard notations on pseudo\-differential operator calculus.
Though $M$ is our configuration space, it is usually the Fourier variable space in applications. 
Thus $x\in T^*_\x M$, $\x\in M$, would be the space variable in the original model. 
In order to adjust to the standard notation in such applications, we express the cotangent bundle as
\[
T^*M =\bigset{(x,\x)}{\x\in M, x\in T^*_\x M}.
\]
Also, for a symbol $a(x,\x)$ on $T^*M$, we quantize it by 
\[
\Op(a)\f(\x) =a(-D_\x,\x)f(\x) 
=(2\pi)^{-d} \iint e^{-i(\x-\y)\cdot x} a(x,\y) \f(\y)d\y dx
\]
for $\f\in C_0^\infty(M)$ in a local coordinate system. We denote the composition of symbols $a$, $b$ 
by $a\#b$, i.e., $\Op(a\# b) =\Op(a)\Op(b)$. We denote the standard H\"ormander symbol class 
on $M$ by $S^m_{\rho,\delta}(M)$, $m\in\re$, $0\leq \d<\rho\leq 1$. Namely, $a\in S^{m}_{\rho,\d}(M)$ 
if for any $\a,\b\in\ze_+^d$ there is $C_{\a\b}>0$ such that 
\[
\bigabs{\pa_x^\a\pa_\x^\b a(x,\x)}\leq C_{\a\b} \jap{x}^{m-\rho|\a|+\d|\b|}, 
\quad \text{for } \x\in M, x\in T^*_\x M,
\]
in a local coordinate. In the following, we use only the case $\rho=1$, $\d=0$. 

The Fourier transform is also defined with a different signature in the exponent, i.e., 
\[
\mathcal{F}^*f(x) =(2\pi)^{-d/2}\int e^{i\x\cdot x} f(\x) d\x, \quad f\in C_0^\infty(\re^d), 
\]
is the Fourier transform from the $\re^d_\x$-space to the $\re^d_x$-space, 
and the definition of the wave front set is also changed, namely, the directions of 
singularities are reversed. 

We denote the Riemannian metric by $(g_{ij}(\x))$, and length of a vector in $T^*M$, inner products, etc., 
are defined using this metric. The densities $m$ and $m_\l$ are not necessarily the Riemannian 
densities. For a pair of non-zero vectors $v, w\in\re^d$, we denote 
\[
\cos(v,w) = \frac{v\cdot w}{|v|\,|w|} \in [-1,1]. 
\]

\noindent{\bf Acknowledgement} The author thanks Alexander Pushnitski for valuable discussion. 
The paper is very much motivated by discussions with him during his visit to Tokyo, 2013. 

%%%%%%%%%%%%%%%%%%%%%%%%%%%%%%%%%%%%%%%%%
%%%%%%%%%%%%  Section 2  %%%%%%%%%%%%%%%%
%%%%%%%%%%%%%%%%%%%%%%%%%%%%%%%%%%%%%%%%%

\section{Limiting absorption principle}

Here we prepare basic estimates on the boundary value of resolvents using the Mourre theory \cite{M}. 
These results are essentially not new (see, e.g., Amrein, Boutet de Monvel, Georgescu \cite{ABG}, 
Section~7.6), and we briefly explain the proof partly for the completeness, but also because 
the formulation is slightly different. 

We choose $I'\Supset I$ so that $p_0^{-1}(I')\subset M_0$. We also choose $\chi_1\in C_0^\infty(M)$ 
such that $\supp \chi_1\subset p_0^{-1}(I')$ and $\chi_1=1$ on $p_0^{-1}(I)$. 
Then we define a vector field $A_0$ by 
\[
A_0=\sum_{j,k=1}^d \chi_1(\x) g^{jk}(\x)\frac{\pa p_0}{\pa \x_k}(\x)\frac{\pa}{\pa \x_j}
\]
in a local coordinate. Then we set
\[
A=\frac12 \bigpare{iA_0-iA_0^*} \quad \text{on } C_0^\infty(M).
\]
$A$ is essentially self-adjoint, and we denote the unique self-adjoint extension by the same symbol. 
Then it is easy to see 
\[
[H_0,iA] =\chi_1(\x)|v(\x)|^2
\]
and that $e^{i\s A}H_0e^{-i\s A}$ and $e^{i\s A}Ve^{-i\s A}$ are $H_0$-bounded operator valued 
$C^\infty$ functions in $\s\in\re$. It is also easy to see that $[V,iA]$ is a compact operator under 
our assumptions. Thus we can apply the Mourre theory on $I$. Moreover, since $A$ is relatively 
bounded with respect to $|D_\x|$, we can conclude the following standard result in two-body scattering theory. 

\begin{thm}\label{thm-LAP}
(1) $I\cap \s_p(H)$ is discrete, and it is finite with their multiplicities. \newline
(2) Let $s>1/2$. Then for any $\l\in I\setminus \s_p(H)$, 
\[
(H-\l\mp i0)^{-1} =\lim_{\e\dto 0} (H-\l\mp i\e)^{-1}
\]
exist as operators from $H^s(M)$ to $H^{-s}(M)$, and they are H\"older continuous in $\l$. 
In particular, the spectrum of $H$ is absolutely continuous on $I\setminus \s_p(H)$. \newline
(3) Let $k\in\na$ and let $s>k+1/2$. Then for $\l\in I\setminus \s_p(H)$, 
\[
(H-\l\mp i0)^{-k-1} =\lim_{\e\dto 0} (H-\l\mp i\e)^{-k-1}
\]
are bounded from $H^s(M)$ to $H^{-s}(M)$, and they are H\"older continuous in $\l$. 
In particular, $(H-\l\mp i0)^{-1}$ are $C^k$-class functions in $\l\in I\setminus\s_p(H)$ 
as operators from $H^s(M)$ to $H^{-s}(M)$. 
\end{thm}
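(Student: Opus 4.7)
\noindent\emph{Proof proposal.} The plan is to apply the standard Mourre theory to the pair $(H,A)$, using the commutator identity $[H_0,iA]=\chi_1(\x)|v(\x)|^2$ as the starting point, and then to translate the resulting $\jap{A}$-weighted estimates into Sobolev estimates via the relative boundedness of $A$ with respect to $|D_\x|$.

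The first step is a strict Mourre estimate on $I$. Because $\chi_1\equiv 1$ on $p_0^{-1}(I)$ and $|v(\x)|^2$ is continuous and strictly positive on the compact set $p_0^{-1}(I)\subset M_0$, there is $\a>0$ with $|v(\x)|^2\geq \a$ on $p_0^{-1}(I)$. Functional calculus then yields $E_I(H_0)[H_0,iA]E_I(H_0)\geq \a\, E_I(H_0)$. Since $V$ is $H_0$-compact and $[V,iA]$ is compact, the standard perturbation argument (replace $E_I(H_0)$ by $E_J(H)$ for a small neighborhood $J\subset I$, absorb the difference $E_J(H_0)-E_J(H)$ into the compact error) upgrades this to the local Mourre inequality
\[
E_J(H)[H,iA]E_J(H)\geq \a' E_J(H)+K,
\]
for some $\a'>0$ and some compact operator $K$, valid on a sufficiently small neighborhood $J\subset I$ of any given $\l\in I$.

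Part (1) is then a direct consequence of the Mourre estimate combined with the virial theorem: eigenvalues of $H$ in each such $J$ are isolated and of finite multiplicity, and a finite subcover of the compact interval $I$ by such $J$'s yields the global finiteness statement. For parts (2) and (3), I would invoke the abstract Mourre LAP (as in Amrein--Boutet de Monvel--Georgescu, \S7). The $C^\infty(A)$ regularity of $H$ required by the theorem follows from the assumed smoothness of $\s\mapsto e^{i\s A}He^{-i\s A}$ as an $H_0$-bounded operator-valued function; the theorem then gives boundedness and H\"older continuity of $\jap{A}^{-s}(H-\l\mp i0)^{-1}\jap{A}^{-s}$ for $s>1/2$ and $\l\in I\setminus\s_p(H)$, with the $C^k$ extension for $s>k+1/2$ obtained by the Jensen--Mourre--Perry iteration of the commutator expansion.

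The main technical obstacle is the transition from the abstract $\jap{A}$-weighted estimate to the stated Sobolev bound. Since $A$ is a first-order differential operator with smooth, compactly supported coefficients, one has $\norm{Af}\leq C\norm{\jap{D_\x}f}$ on $C_0^\infty(M)$; interpolating with the trivial $L^2$ bound at $s=0$ and iterating to handle higher powers yields $\jap{A}^s\leq C_s\jap{D_\x}^s$ in the operator-inequality sense on a suitable dense core. This converts the $\jap{A}^{-s}(\cdots)\jap{A}^{-s}$ sandwich into the desired $H^s(M)\to H^{-s}(M)$ bound. Some care is required to verify the compatibility of domains, namely $D(\jap{A}^s)\supset H^s(M)$, so that the LAP statement and this interpolation can be combined, but these verifications are routine given the explicit form of $A$.
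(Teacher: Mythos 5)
Your proposal follows the same route as the paper: the paper constructs exactly this conjugate operator $A$, records the commutator $[H_0,iA]=\chi_1(\x)|v(\x)|^2$, the $C^\infty$ regularity of $\s\mapsto e^{i\s A}(H_0+V)e^{-i\s A}$, and the compactness of $[V,iA]$, then invokes the abstract Mourre theory (citing \cite{M,JMP,ABG,Ge}) together with the relative boundedness of $A$ with respect to $|D_\x|$ to pass from $\jap{A}$-weighted to Sobolev estimates. Your write-up fills in the same standard steps in slightly more detail (the strict Mourre estimate on $p_0^{-1}(I)$, virial argument, JMP iteration, Heinz/interpolation to get $\jap{A}^s\lesssim\jap{D_\x}^s$); the only small overstatement is the unqualified claim that $V$ is $H_0$-compact, which is not literally assumed in the paper (only $H_0$-boundedness and compactness of $[V,iA]$ are), but the spectrally localized compactness actually used is available since $p_0^{-1}(I')$ is compact, so this does not affect the argument.
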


For the abstract Mourre theory, we refer Mourre \cite{M}, Jensen, Mourre, Perry \cite{JMP},
Amrein, Boutet de Monvel, Georgescu \cite{ABG} and G\'erard \cite{Ge}. 

%%%%%%%%%%%%%%%%%%%%%%%%%%%%%%%%%%%%%%%%%
%%%%%%%%%%%%  Section 3  %%%%%%%%%%%%%%%%
%%%%%%%%%%%%%%%%%%%%%%%%%%%%%%%%%%%%%%%%%

\section{Isozaki-Kitada modifiers}

Here we construct Isozaki-Kitada type modifiers for our model. For the short range perturbation, 
we can construct the modifiers as pseudodifferential operators (see, Isozaki, Kitada \cite{IK4}). 
We employ a slightly different construction from \cite{IK4}, which is already used, for example, 
in \cite{N1}. 

Let $I\Subset\re$ as before, and let 
\[
\O_\pm^\e =\bigset{(x,\x)\in T^*M}{\pm \cos(x,v(\x))>-1+\e, |x|>1, p_0(\x)\in I}
\]
with $\e>0$. We first construct symbols $a^\pm$ on $T^*M$ such that, roughly speaking, 
\[
H\Op(a^\pm) -\Op(a^\pm) H_0 \sim 0 \quad \text{in }\O_\pm^\e, 
\]
and $a^\pm(x,\x)\to 1$ as $|x|\to\infty$ in $\O_\pm^\e$. We construct $a^\pm(x,\x)$ of the form:
\[
a^\pm(x,\x)\sim e^{i\g_\pm(x,\x)}\bigpare{1+a_1^\pm(x,\x) +a_2^\pm(x,\x) +\cdots},
\]
where 
\[
\g_\pm(x,\x) =\int_0^{\pm \infty} V(x+t v(\x),\x)dt
\]
and $a_j^\pm\in S_{1,0}^{-\m+1-j}(M)$. 

We note, if $(x,\x)\in \O_\pm^\e$, then $\pm\cos(x+tv(\x),\x)>-1+\e$ for $\pm t\geq 0$, and 
\begin{align*}
|x+tv(\x)|^2 &= |x|^2 +t^2|v(\x)|^2 +2tx\cdot v(\x) \\
&\geq |x|^2 +t^2|v(\x)|^2 -2|t|(1-\e)|x|\cdot |v(\x)| \\
&\geq \e(|x|^2+|tv(\x)|^2) \geq \frac{\e}{2} (|x|+|tv(\x)|)^2.
\end{align*}
This implies, in particular, $\g_\pm\in S^{-\m+1}_{1,0}$ on $\O_\pm^\e$. 
We also note $\g_\pm$ satisfies the transport equation:
\[
v(\x)\cdot \pa_x \g_\pm(x,\x) +V(x,\x)=0.
\]
We set $a_0^\pm(x,\x)=1$, and we let
\begin{align*}
r^\pm_j(x,\x) &= e^{-i\g_\pm}\bigpare{p_0\# (e^{i\g_\pm}a_j^\pm)}
-\bigpare{p_0a_j^\pm -iv(\x)\cdot \pa_x a_j^\pm +V a_j^\pm}, \\
V_j^\pm(x,\x) &= e^{-i\g_\pm}\bigpare{V\# (e^{i\g_\pm}a_j^\pm)}-V a_j^\pm,
\end{align*}
for $j=0,1,2,\dots$. 
We compute the symbol of $H\Op(a^\pm)-\Op(a^\pm)H_0$  formally:  
\begin{align*}
&(p_0+V)\#\bigpare{e^{i\g_\pm}(1+a_1^\pm+\cdots)} -e^{i\g_\pm}\bigpare{1+a_1^\pm+\cdots}p_0 \\
&\quad = e^{i\g_\pm}\Bigbra{\sum_{j=1}^\infty (-iv(\x)\cdot\pa_x a_j^\pm(x,\x))
+\sum_{j=0}^\infty r_j^\pm(x,\x) +\sum_{j=0}^\infty V_j^\pm(x,\x)}. 
\end{align*}
We solve the following equations iteratively: 
\[
v(\x)\cdot \pa_x a_j^\pm(x,\x) + i r_{j-1}^\pm(x,\x) +i V_j^\pm(x,\x)=0, \quad j=1,2,\dots. 
\]
We note $r_0^\pm, V_0^\pm \in S^{-\m-1}_{1,0}$ on $\O_\pm^\e$. We choose solutions as follows: 
\[
a_j^\pm(x,\x) =i\int_0^{\pm\infty} \bigpare{r_{j-1}^\pm(x+tv(\x),\x) +V_{j-1}^\pm(x+tv(\x),\x)} dt
\]
for $j=1,2,\dots$, so that $a_j^\pm\in S^{-\m+1-j}_{1,0}$ and hence 
$r_j^\pm, V_j^\pm \in S^{-\m-1-j}_{1,0}$ on $\O_\pm^\e$, iteratively. 
Then we define $a^\pm$ as an asymptotic sum 
\begin{equation}
a^\pm(x,\x) \sim e^{i\g_\pm(x,\x)} \bigpare{1+a_1^\pm(x,\x)+a_2^\pm(x,\x)+\cdots}, 
\end{equation}
which is in $S^0_{1,0}$, and $(p_0+V)\# a^\pm -a^\pm\# p_0\in S^{-\infty}_{1,0}$ 
on $\O_\pm^\e$. 

Now we introduce microlocal cut-off to define operators $J_\pm$. 
Let $\y\in C^\infty([-1,1])$ be such that 
\[
\y(s)=\begin{cases} 1 \quad &\text{if }s>-1+\e, \\ 0 &\text{if }s<-1+\e/2. \end{cases}
\]
We fix $I_0\Subset I$, and choose $\chi\in C_0^\infty(\re)$ such that 
$\chi=1$ on $I_0$ and $\supp[\chi]\subset I$. Then we set 
\begin{equation} \label{eq-total-symbol}
\tilde a^\pm(x,\x) =\chi(p_0(\x))\y(\pm \cos(x,v(\x))) a^\pm(x,\x), 
\quad\x\in M, x\in T^*_\x M. 
\end{equation}
It is easy to see $\tilde a^\pm\in S^0_{1,0}(M)$, globally, and we define 
\[
J_\pm =\Op(\tilde a^\pm), 
\]
for given $\e>0$ and $I_0\Subset I$. It is straightforward to verify 
\[
G_\pm := H J_\pm -J_\pm H_0 =\Op(g^\pm)
\quad\text{with } g_\pm \in S^{-1}_{1,0}(M), 
\]
and 
\begin{align}
\esssupp[g_\pm]\subset&\bigset{(x,\x)}{p_0(\x)\in I, -1+\e/2\leq \pm \cos(x,v(\x)) \leq -1+\e}
\nonumber \\
&\cup \bigset{(x,\x)}{p_0(\x)\in I\setminus I_0}, \label{eq-G-remainder}
\end{align}
where $\esssupp[a]$ denotes the essential support of a pseudodifferential operator or its symbol. 

%%%%%%%%%%%%%%%%%%%%%%%%%%%%%%%%%%%%%%%%%
%%%%%%%%%%%%  Section 4  %%%%%%%%%%%%%%%%
%%%%%%%%%%%%%%%%%%%%%%%%%%%%%%%%%%%%%%%%%

\section{Microlocal resolvent estimates}

Here we discuss a generalization of the microlocal resolvent estimates due to Isozaki and 
Kitada \cite{IK1,IK3}. We discuss only the two-sided microlocal resolvent estimates, which are 
used later. Our formulation is closer to the H\"ormander type microlocal analysis 
than those in the papers by Isozaki and Kitada, and they are actually more precise.
About closely related phase space localization estimates, 
we also refer a work by Mourre \cite{M2} (see also G\'erard \cite{Ge} for an alternative proof). 

We fix $I_0\Subset I\setminus \s_p(H)$, and we consider the microlocal properties of 
$(H-\l\mp i0)^{-1}$ : $H^s(M)\to H^{-s}(M)$, $s>1/2$, for $\l\in I_0$. 
Let $K^\pm(\l)$ be the distribution kernels of $(H-\l\mp i0)^{-1}$. 
For a distribution $T$ on $M$, we denote the wave front set of $T$ by $\WF(T)$. 
When we discuss wave front sets of distributions on $M\times M$, we identify 
$T^*(M\times M)\cong T^*M \times T^* M$, 
and we denote a point in $T^*(M\times M)$ as 
$(x,\x,y,\y)\in T^*M\times T^*M$, where $\x,\y\in M$, $x\in T^*_\x M$, $y\in T^*_\y M$. 

Our microlocal resolvent estimates are formulated as follows. We denote
\begin{align*}
&\S^0= \bigset{(x,\x,-x,\x)}{\x\in M, x\in T^*_\x M}, \\
& \S^1_\pm(\l)  = \bigset{(x+tv(\x),\x,-x,\x)}{\x\in M,  x\in T^*_\x M, p_0(\x)=\l, \pm t\geq 0}, \\
&\S_\pm^2(\l) = \bigset{(tv(\x),\x)}{\x\in M, p_0(\x)=\l,\pm t\geq 0}\times T^*M, \\
&\S_\pm^3(\l) = T^*M\times \bigset{(tv(\x),\x)}{\x\in M,p_0(\x)=\l,\pm t\geq 0}. 
\end{align*}

\begin{thm} \label{thm-MLRE}
For $\l\in I\setminus \s_p(H)$, 
\[
\WF(K^\pm(\l))\subset \S^0 \cup \S_\pm^1(\l)\cup \S_\pm^2(\l)\cup \S_\pm^3(\l).
\] 
\end{thm}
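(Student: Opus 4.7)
The plan is to prove the inclusion by a microlocal testing argument: for every $(x_0,\x_0,y_0,\y_0)$ outside $\S^0\cup \S^1_\pm(\l)\cup \S^2_\pm(\l)\cup \S^3_\pm(\l)$, I construct $B_1,B_2\in \Op(S^0_{1,0}(M))$ elliptic at $(x_0,\x_0)$ and $(y_0,\y_0)$ respectively, so that $B_1R^\pm(\l)B_2^*$ is a smoothing operator, where $R^\pm(\l):=(H-\l\mp i0)^{-1}$. The key geometric point is that $\S^2_\pm(\l)$ and $\S^3_\pm(\l)$ are cut out by the extremal condition $\cos(x,v(\x))=\pm 1$, which lies strictly outside $\O_\mp^\e$ for small $\e>0$. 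Thus, away from these sets, I may shrink the microlocal supports of $B_1$ (resp.\ $B_2^*$) into $\O_\mp^\e$, where $J_\mp^*$ (resp.\ $J_\mp$) is microlocally elliptic; an elliptic parametrix then yields $B_1\equiv B_1'J_\mp^*$ and $B_2^*\equiv J_\mp B_2'^*$ modulo smoothing, with $B_1',B_2'\in \Op(S^0_{1,0})$.

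From the intertwining $HJ_\pm-J_\pm H_0=G_\pm$ of Section~3 I derive
\begin{align*}
J_\mp^*R^\pm(\l) &= R_0^\pm(\l)J_\mp^* - R_0^\pm(\l)G_\mp^* R^\pm(\l),\\
R^\pm(\l)J_\mp &= J_\mp R_0^\pm(\l) - R^\pm(\l)G_\mp R_0^\pm(\l),
\end{align*}
where $R_0^\pm(\l)=(H_0-\l\mp i0)^{-1}$ is multiplication by $(p_0(\x)-\l\mp i0)^{-1}$. Substituting both into $B_1R^\pm(\l)B_2^*\equiv B_1'J_\mp^*R^\pm(\l)J_\mp B_2'^*$ reduces the operator, modulo smoothing, to terms in which only $R_0^\pm(\l)$ appears (flanked by products of $J_\mp,J_\mp^*,G_\mp,G_\mp^*$), plus a residual of the form $B_1'R_0^\pm(\l)G_\mp^* R^\pm(\l)G_\mp R_0^\pm(\l)B_2'^*$ in which a single $R^\pm(\l)$ survives. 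The wave front set of $R_0^\pm(\l)$ is computed directly from its kernel $(p_0(\x)-\l\mp i0)^{-1}\d(\x-\y)$: the $\d$-function gives $\S^0$, while the Sokhotski--Plemelj singularity of $(p_0-\l\mp i0)^{-1}$ along $\S_\l$ combines with the $\d$ to produce the bicharacteristic images $\S^1_\pm(\l)$ under the Hamilton flow $(x+tv(\x),\x)$ of $p_0(\x)$. Hence these $R_0^\pm$-terms are smoothing when tested against the given $B_1,B_2^*$.

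The main obstacle is the residual term with the surviving $R^\pm(\l)$. I iterate the two intertwinings $k$ times to flank the remaining resolvent with $k$ additional $G_\mp$-factors on each side; since $G_\mp\in \Op(S^{-1})$, each iteration gains one order of decay in the symbolic calculus. Coupled with the LAP bound $R^\pm(\l)^{k+1}\colon H^s\to H^{-s}$ for $s>k+1/2$ from Theorem~\ref{thm-LAP}(3), the iterated residual becomes bounded $H^{-N}\to H^N$ for any prescribed $N$, provided $k$ is chosen large enough. The delicate step in this iteration is to use the essential support description of $G_\mp$ in \eqref{eq-G-remainder} to verify that every intermediate factor remains microlocalized inside the excluded union $\S^0\cup \S^1_\pm\cup \S^2_\pm\cup \S^3_\pm$, so that outer composition with $B_1,B_2$ kills the remainders and we conclude that the iterated residual is indeed smoothing.
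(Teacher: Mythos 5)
Your overall strategy—Isozaki--Kitada modifiers, the resolvent intertwining, and a microlocal testing argument—is the same as the paper's, but there are two genuine gaps.

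First, the geometric reduction ``shrink the microlocal support of $B_1$ (resp.\ $B_2^*$) into $\O_\mp^\e$'' only works when $p_0(\x_0)=\l$ (resp.\ $p_0(\y_0)=\l$). The sets $\S^2_\pm(\l)$ and $\S^3_\pm(\l)$ carry the constraint $p_0(\x)=\l$, so a point $(x_0,\x_0)$ with $p_0(\x_0)\neq\l$ is automatically outside them even if $\cos(x_0,v(\x_0))=1$ or $p_0(\x_0)\notin I$; in either case there is no $\e>0$ for which $(x_0,\x_0)\in\O_-^\e$, and the microlocal ellipticity of $J_-^*$ at $(x_0,\x_0)$ fails (recall $\tilde a^-$ contains the cutoff $\chi(p_0(\x))$). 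So the off-shell directions need a separate argument. The paper treats this with an elliptic-parametrix/energy-localization lemma (Lemma~\ref{lem-energy-localization}): since $H-\l$ is elliptic off $\S_\l$, $\chi(H-\l\mp i0)^{-1}\chi$ agrees with $\chi Q\chi$ modulo smoothing for a parametrix $Q\in\OPS^0_{1,0}$, which has kernel wave front in $\S^0$ only. Your proposal does not cover this case, nor the mixed cases where exactly one of $p_0(\x_0),p_0(\y_0)$ equals $\l$.

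Second, the iteration ``flank the remaining resolvent with $k$ additional $G_\mp$-factors'' and the appeal to Theorem~\ref{thm-LAP}(3) with $R^\pm(\l)^{k+1}\colon H^s\to H^{-s}$ for $s>k+1/2$ is both unnecessary and not well posed. Iterating the intertwinings does not produce powers $R^\pm(\l)^{k+1}$ (only more $R_0^\pm$ and $G_\mp$ factors), and after the first substitution the residual $R_0^\pm G_\mp^*\,R^\pm(\l)\,G_\pm R_0^\pm$ has no adjacent $J_\pm$ left to trade in again. The paper needs only a single application of the LAP with $s>1/2$: the essential step is that after cutting off with $\chi_1,\chi_2$ microlocalized near $(x_1,\x_1)$ and $(x_2,\x_2)$, the compositions $\Op(\chi_1)R_0^\pm(\l)G_\mp^*$ and $G_\pm R_0^\pm(\l)\Op(\chi_2)$ are already smoothing. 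This follows from the wave front propagation $\WF K_0^\pm\subset\S^0\cup\S^1_\pm$ together with the thin conic essential-support description of $G_\pm$ in \eqref{eq-G-remainder}: propagating forward (resp.\ backward) along $\pm v(\x)$ from the support of $\chi_1$ (resp.\ $\chi_2$), where $\pm\cos(x,v(\x))<1-\e$, one stays outside the shell $1-\e\le\pm\cos(x,v(\x))\le 1-\e/2$ where $G_\mp$ lives. Sandwiching the bounded operator $R^\pm(\l)\colon H^s\to H^{-s}$ between two smoothing operators then gives a smoothing residual in one pass—no bootstrap and no higher-derivative LAP required. Your proposal replaces a disjoint-support argument by an iteration that cannot be carried out as stated.
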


\begin{rem}
$\S^0$ represents the identity map on $T^*M$, and it comes from pseudodifferential operator 
type properties. As we see in Corollary~4.4, $\S^1_\pm(\l)$ comes from the singularities of 
the free resolvents. $\S^2_\pm(\l)$ and $\S^3_\pm(\l)$ are generated by combinations of 
smooth off-diagonal propagations and the singularities of the free resolvents. 
\end{rem}

The microlocal resolvent estimates of Isozaki-Kitada follow easily from this: 

\begin{cor}
Let $-1<\m_-<\m_+<1$, and suppose $b_\pm(x,\x)\in S^0_{1,0}(M)$ satisfy 
\begin{align*}
&\esssupp[b_+] \subset \bigset{(x,\x)}{\cos(x,v(\x))\geq \m_+, p_0(\x)\in I_0}, \\
&\esssupp[b_-] \subset \bigset{(x,\x)}{\cos(x,v(\x))\leq \m_-, p_0(\x)\in I_0}.
\end{align*}
Let $P_\pm =\Op(b_\pm)$. Then $P_\mp^* (H-\l\mp i0)^{-1} P_\pm$, $\l\in I_0$, 
are smoothing operators. 
\end{cor}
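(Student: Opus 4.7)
The plan is to show that the Schwartz kernel of $P_\mp^* (H-\l\mp i0)^{-1} P_\pm$ has empty wave front set, so that the kernel is $C^\infty$ and the operator is smoothing. By Theorem~\ref{thm-MLRE}, $\WF(K^\pm(\l))$ lies in $\S^0\cup\S^1_\pm(\l)\cup\S^2_\pm(\l)\cup\S^3_\pm(\l)$. A zeroth-order pseudodifferential operator $\Op(b)$ has kernel with wave front set contained in the subset of $\S^0$ whose cotangent direction lies in $\esssupp[b]$, and composition with such an operator restricts the corresponding factor of a kernel's wave front set to that essential support. Composing $K^\pm(\l)$ with $P_\pm$ on the right and $P_\mp^*$ on the left therefore restricts the wave front set of the kernel of $P_\mp^*(H-\l\mp i0)^{-1}P_\pm$ to the subset of the four pieces above on which the input cotangent direction lies in $\esssupp[b_\pm]$ and the output cotangent direction lies in $\esssupp[b_\mp]$; here ``input'' and ``output'' are identified using that $\S^0$ represents the identity relation on $T^*M$.

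It then suffices to check that each of the four pieces is annihilated by this pair of conditions, and I carry out the $+$ case, the $-$ case being symmetric. On $\S^0$ the input and output cotangent coincide as a single $x$, and the conditions $\cos(x,v(\x))\geq \mu_+$ and $\cos(x,v(\x))\leq \mu_-$ are incompatible since $\mu_-<\mu_+$. On $\S^1_+(\l)$ the input is $(x,\x)$ and the output is $(x+tv(\x),\x)$ with $t\geq 0$; a short computation gives
\[
\frac{d}{dt}\cos(x+tv(\x),v(\x)) = \frac{|v(\x)|^2|x|^2-(x\cdot v(\x))^2}{|v(\x)|\,|x+tv(\x)|^3} \geq 0
\]
by Cauchy--Schwarz, so $\cos(x+tv(\x),v(\x))\geq\cos(x,v(\x))\geq\mu_+>\mu_-$ for $t\geq 0$, contradicting the output condition. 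On $\S^2_+(\l)$ the output cotangent is $tv(\x)$ with $t>0$, so $\cos=1>\mu_-$; on $\S^3_+(\l)$, after accounting for the sign in the canonical relation identification, the input cotangent is $-tv(\x)$ with $t>0$, so $\cos=-1<\mu_+$. Thus the kernel has empty wave front set and is smooth.

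The main obstacle I anticipate is not analytic but rather the bookkeeping of sign conventions: because the paper uses a nonstandard Fourier signature and the wave front set directions are correspondingly reversed, one must verify carefully which of the cotangent factors appearing in the notation of $\S^i_\pm(\l)$ corresponds to the input side of the kernel and which to the output, so that the essential support conditions on $b_\pm$ and $b_\mp$ are imposed on the correct factors of $\S^2_\pm(\l)$ and $\S^3_\pm(\l)$. Once the canonical relation convention making $\S^0$ the identity is fixed, the other assignments are forced, and the remaining analytic content is the monotonicity of the cosine under free flow, which is elementary.
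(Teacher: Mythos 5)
Your case analysis over $\S^0,\S^1_\pm,\S^2_\pm,\S^3_\pm$ is correct (including the monotonicity of $t\mapsto\cos(x+tv(\x),v(\x))$ and the sign bookkeeping for $\S^3_\pm$), and it captures the intended use of Theorem~\ref{thm-MLRE}. However, the step where you pass from the wave front bound on $K^\pm(\l)$ to a wave front bound on $P_\mp^*(H-\l\mp i0)^{-1}P_\pm$ is too quick, and the gap is real. What microlocal ellipticity of the pseudodifferential operator $P_\mp^*\otimes P_\pm^{\,t}$ actually gives is
\[
\WF\bigpare{(P_\mp^*\otimes P_\pm^{\,t})K^\pm(\l)}\ \subset\ \WF\bigpare{K^\pm(\l)}\ \cap\ \esssupp\bigbrac{P_\mp^*\otimes P_\pm^{\,t}},
\]
and the essential support of the tensor-product operator does \emph{not} reduce to $\esssupp[b_\mp]\times\esssupp[b_\pm]$: it also contains all points $(x_1,\x_1,0,\x_2)$ with $(x_1,\x_1)\in\esssupp[b_\mp]$ and all points $(0,\x_1,-x_2,\x_2)$ with $(x_2,\x_2)\in\esssupp[b_\pm]$, because the second (resp.\ first) symbol factor is $O(1)$ rather than rapidly decaying when its fibre variable stays bounded. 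Your argument only rules out the ``interior'' points with both covectors nonzero. But the pieces $\S^2_\pm(\l)$ and $\S^3_\pm(\l)$ are products with all of $T^*M$, including the zero section; e.g.\ $(0,\x_1,-x_2,\x_2)\in\S^2_+(\l)$ whenever $p_0(\x_1)=\l$ and $(x_2,\x_2)$ is arbitrary, and this point survives both cutoffs when $(x_2,\x_2)\in\esssupp[b_+]$. So emptiness of the wave front set of the composed kernel is not established, and a nonempty wave front set contained in $\{x_1=0\}\cup\{x_2=0\}$ is compatible with a genuinely singular kernel (think of $\delta(\x-\x_0)\otimes 1$).

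The fix is exactly the mechanism the paper uses elsewhere: combine the microlocal information with the limiting absorption principle and interpolate, as in the proof of Lemma~\ref{lem-energy-localization}. Concretely, one shows separately that $P_\mp^*(H-\l\mp i0)^{-1}P_\pm$ maps $H^s\to H^N$ for every $N$ (using the one-sided version of the wave front estimate together with Theorem~\ref{thm-LAP}), and by duality that it maps $H^{-N}\to H^{-s}$; interpolation then gives boundedness $H^{-N'}\to H^{N'}$ for all $N'$, i.e.\ smoothing. Alternatively, factor $P_\pm$ through the Isozaki--Kitada modifiers $J_\pm$ (which are elliptic on $\esssupp[b_\pm]$ once $\e$ is chosen small enough relative to $\m_\pm$) and reduce to the decomposition \eqref{eq-decomp} together with Corollary~4.4 and (C-2)--(C-7) in the proof of Theorem~\ref{thm-MLRE}, which avoids invoking the wave-front composition calculus on the full resolvent kernel at all. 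Either route closes the gap; as written, the step from Theorem~\ref{thm-MLRE} to ``the kernel has empty wave front set'' is not justified.
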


We first consider the free resolvents, i.e.,  $(H_0-\l\mp i0)^{-1}$. 

\begin{lem}\label{lem-free-resolvent}
For $\l\in I$, 
\[
\WF\bigbrac{(p_0(\x)-\l\mp i0)^{-1}} = \bigset{(t v(\x),\x)}{\x\in M, p_0(\x)=\l, \pm t>0}. 
\]
\end{lem}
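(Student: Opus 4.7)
The plan is to reduce to a one-dimensional computation via H\"ormander's pullback theorem. First, $(p_0(\x)-\l)^{-1}$ is smooth away from $\S_\l$, so $\WF\bigbrac{(p_0(\x)-\l\mp i0)^{-1}}$ lies over $\S_\l$. Since $\S_\l\subset p_0^{-1}(I)\subset M_0$, we have $dp_0(\x)=v(\x)\neq 0$ on a neighborhood of $\S_\l$, and $p_0$ is a submersion there. The distribution of interest equals the pullback $p_0^*u$ of the one-dimensional distribution $u=(s-\l\mp i0)^{-1}\in \mathcal{D}'(\re)$, and H\"ormander's pullback theorem for submersions gives the equality
\[
\WF(p_0^*u) = \bigset{(\x,\t\,dp_0(\x))}{(p_0(\x),\t)\in\WF(u),\ \t\neq 0}.
\]

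Next I compute $\WF(u)$ directly. The Plemelj--Sokhotski formula gives $u=\mathrm{PV}(s-\l)^{-1}\pm i\pi\d(s-\l)$, and combining the Fourier transforms of $\d$ and of $\mathrm{PV}(1/\cdot)$ shows that $\mathcal{F}^* u$ is, modulo a Schwartz function, supported on the half-line $\pm \t>0$ in the paper's reversed-sign Fourier convention. Hence $\WF(u)=\{(\l,\t):\pm\t>0\}$. Pulling back via $p_0$ converts a cotangent direction $\t\in\re$ at $\l$ into $\t\,v(\x)\in T^*_\x M$ at any point $\x\in\S_\l$, which in the paper's $(x,\x)$ notation with $x\in T^*_\x M$ is precisely $(tv(\x),\x)$ with $t=\t$, giving the claimed set.

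The main obstacle is bookkeeping the sign convention: because the paper's Fourier transform $\mathcal{F}^*$ carries the opposite sign in the exponent, the wave front direction of $(s\mp i0)^{-1}$ is the reflection of what the standard convention yields, and this reflection is exactly what produces the matching signs $\pm t>0$ in the final answer. Because $p_0$ is a submersion near $\S_\l$, equality of wave front sets (and not merely inclusion) follows directly from the pullback theorem, so no separate argument for the reverse inclusion is needed.
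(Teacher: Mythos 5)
Your proof is correct and reaches the same result as the paper, but packages the reduction to one dimension differently. The paper performs a change of coordinates flattening $p_0$ to $\x_1$ and then directly computes $\mathcal{F}^*[\f_1(\x_1)\f_2(\x')(\x_1\mp i0)^{-1}]$ for a localized version; you instead invoke H\"ormander's pullback theorem for the submersion $p_0$, which abstracts exactly that coordinate change. Both routes then reduce to the same one-dimensional wave front set computation for $(s-\l\mp i0)^{-1}$ (you via Plemelj--Sokhotski, the paper via direct contour integration), and both correctly track the sign reversal coming from the $\mathcal{F}^*$ convention. The paper in fact cites H\"ormander, Vol.~I, Example~8.2.6, which is precisely the pullback computation you sketch, so the two arguments are close in substance; the pullback-theorem phrasing buys you modularity, while the paper's hands-on version is more self-contained. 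One point worth flagging: H\"ormander's pullback theorem (Thm.~8.2.4) as stated gives only the inclusion $\WF(f^*u)\subset f^*\WF(u)$. Equality when $f$ is a submersion is true and is needed here since the lemma asserts equality, but it is a separate (short) fact rather than the theorem itself: in rectifying coordinates $f^*u=u\otimes 1$, and $\WF(u\otimes 1)=\bigset{(x,y;\xi,0)}{(x,\xi)\in\WF(u)}$ can be checked directly. You should state this rather than attribute it to the pullback theorem; the paper's explicit computation delivers both inclusions at once.
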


\begin{proof}
This is discussed, for example, in H\"ormander \cite{Ho} Vol.1, Example~8.2.6. 
We give a proof for the sake of completeness. 

It is obvious that the wave front is contained in $\bigset{(x,\x)}{p_0(\x)=\l}$, and it suffices 
to consider the case $p_0(\x)=\l$. 
By a partition of unity and a change of coordinates, 
we may assume $\l=0$, $\x=0$ and $p_0(\x) =\x_1$ in a neighborhood of 0. 
In this case, $v(\x)=(1,0,\dots, 0)$. 

We note 
\[
\mathcal{F}^*_1[(\x\mp i0)^{-1}] =\pm \sqrt{2\pi}i F(\pm x), \quad x\in\re, 
\]
where $\mathcal{F}_1^*$ is the one-dimensional Fourier transform, and $F(x)$ is the 
characteristic function of $(0,\infty)$.  Let $\f_1\in C_0^\infty(\re)$, $\f_2\in C_0^\infty(\re^{d-1})$. 
Then by writing $\x=(\x_1,\x')$, we have 
\[
\mathcal{F}^*\bigbrac{\f_1(\x_1)\f_2(\x')(\x_1\mp  i0)^{-1}}(x_1,x') 
=\pm i (\check \f_1* F)(\pm x_1)\check\f_2(x'), \quad (x_1,x')\in\re^d.
\]
This implies $\WF\bigbrac{(\x_1\mp  i0)^{-1}}=\bigset{(t,0, 0, \x')}{\pm t>0, \x'\in\re^{d-1}}$. 
\end{proof}

Let $K_0^\pm(\l)$ be the distribution kernel of $(H_0-\l\mp i0)^{-1}$. 
Then the above lemma implies the following characterization of the wave front set of $K_0^\pm(\l)$. 

\begin{cor}
For $\l\in I$, $\WF\bigbrac{K_0^\pm(\l)} \subset\S^0\cup \S^1_\pm(\l)$. 
\end{cor}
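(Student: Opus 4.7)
The plan is to read the wave front set of $K_0^\pm(\lambda)$ directly off its multiplicative structure. Since $H_0$ is multiplication by $p_0(\xi)$ on $L^2(M, m)$, the resolvent $(H_0 - \lambda \mp i0)^{-1}$ is multiplication by $f_\pm(\xi) := (p_0(\xi) - \lambda \mp i0)^{-1}$, whose wave front set is given by Lemma~\ref{lem-free-resolvent}. Consequently the kernel factors as $K_0^\pm(\lambda)(\xi, \eta) = f_\pm(\xi)\,\delta(\xi - \eta)$ on $M \times M$, so in particular it is supported on the diagonal and I only need to study the wave front set at points $(\xi_0, \xi_0)$.

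I would localize by a cutoff $\phi \in C_0^\infty(M \times M)$ supported near $(\xi_0, \xi_0)$ and compute, in a local coordinate, the paper's $\mathcal{F}^*$-transform of $\phi K_0^\pm(\lambda)$. Integrating out $\eta$ against $\delta(\xi - \eta)$ collapses the double integral to
\[
\widetilde{K}(x, y) = (2\pi)^{-d} \int e^{i \xi \cdot (x+y)} \phi(\xi, \xi) f_\pm(\xi)\, d\xi,
\]
which, up to a $(2\pi)^{d/2}$ factor, is $\mathcal{F}^*[\phi_\Delta f_\pm](x + y)$ with $\phi_\Delta(\xi) := \phi(\xi, \xi)$. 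The key observation is that $\widetilde K$ depends on $(x, y)$ only through $x + y$, which transfers the full $2d$-dimensional wave front question for $K_0^\pm(\lambda)$ to the scalar wave front question for $f_\pm$ on $M$, already resolved by Lemma~\ref{lem-free-resolvent}.

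I would then conclude by a dichotomy on $x_0 + y_0$ along conic rays $t(x_0, y_0)$, $t \to +\infty$. If $x_0 + y_0 = 0$, then $\widetilde K(t(x_0, y_0))$ is independent of $t$, so such a direction generally lies in $\WF(K_0^\pm(\lambda))$; but all such points are of the form $(x_0, \xi_0, -x_0, \xi_0) \in \Sigma^0$. If $x_0 + y_0 \ne 0$, then rapid decay of $t \mapsto \mathcal{F}^*[\phi_\Delta f_\pm](t(x_0 + y_0))$ fails only when the direction $x_0 + y_0$ lies in $\WF_{\xi_0}(f_\pm)$; by Lemma~\ref{lem-free-resolvent} this forces $p_0(\xi_0) = \lambda$ and $x_0 + y_0 = s\, v(\xi_0)$ for some $\pm s > 0$. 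Setting $x := -y_0$ one recovers $y_0 = -x$ and $x_0 = x + s\, v(\xi_0)$, which is exactly the stratum $\Sigma^1_\pm(\lambda) \setminus \Sigma^0$. Combining the two cases yields the claimed inclusion.

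The one point that needs care is consistent bookkeeping under the paper's reversed Fourier convention and the identification $T^*(M \times M) \cong T^*M \times T^*M$: I need to verify that the singular cone of $f_\pm$, parametrized by $\pm t > 0$ in Lemma~\ref{lem-free-resolvent}, matches the $\pm t \geq 0$ stratification of $\Sigma^1_\pm(\lambda)$ after pushforward through the addition map $(x, y) \mapsto x + y$, with the $t = 0$ boundary piece absorbed into $\Sigma^0$. Past this bookkeeping there is no substantive analytic obstacle; the whole argument is a direct application of the definition of the wave front set together with Lemma~\ref{lem-free-resolvent}.
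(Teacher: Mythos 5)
Your proof is correct and is essentially the approach the paper takes: both start from the kernel identity $K_0^\pm(\xi,\eta)=(p_0(\xi)-\lambda\mp i0)^{-1}\delta(\xi-\eta)$ and reduce everything to Lemma~4.3. The paper invokes H\"ormander's general wave-front calculus (Vol.~I, Prop.~8.2.10) to perform that reduction, while explicitly noting that ``one can also compute the wave front set directly,'' which is precisely what you carry out via the pushforward through the addition map $(x,y)\mapsto x+y$.
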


\begin{proof}
It is easy to see 
\[
K_0^\pm(\x,\y)= (p_0(\x)-\l\mp i0)^{-1}\d(\x-\y), \quad \x,\y\in M,
\]
and the claim follows from a general theorem, e.g., \cite{Ho} Vol.1, Proposition~8.2.10 and 
Lemma~\ref{lem-free-resolvent}. 
One can also compute the wave front set directly, and show the equality actually holds. 
\end{proof}

\begin{proof}[Proof of Theorem~\ref{thm-MLRE}]
We fix $\l\in I_0\Subset I\setminus \s(H)$, and we consider the ``$+$'' case only. The ``$-$'' case is proved similarly. 
We suppose 
\[
(x_1,\x_1,-x_2,\x_2)\notin \S^0 \cup \S_+^1(\l)\cup \S_+^2(\l)\cup \S_+^3(\l),
\] 
and we show $(x_1,\x_1,-x_2,\x_2)\notin \WF(K^+(\l))$. We consider several cases separately. 

\noindent{\bf Case 1:} At first we consider the case $p_0(\x_1)=p_0(\x_2)=\l$. 
Since $x_1\notin \{tv(\x_1)\,|\,t>0\}$ and $x_2\notin\{tv(\x_2)\,|\,t<0\}$, we can choose $0<\e\ll 1$ so small that 
\[
\cos(x_1,v(\x_1))<1-2\e, \quad \cos(x_2,v(\x_2))>-1+2\e.
\]
Then we can choose $\i_1,\i_2\in S^0_{1,0}(M)$ so that they are homogeneous of order 0 in $x$
when $|x|>1$,  $\i_j(x_j,\x_j)>0$, $\i_j$ are supported in small conic neighborhoods of $(x_j,\x_j)$ 
and hence 
\begin{align*}
&\supp[\i_1] \subset \bigset{(x,\x)}{\cos(x,v(\x))<1-\e}, \\
&\supp[\i_2]\subset \bigset{(x,\x)}{\cos(x,v(\x))>-1+\e}.
\end{align*}
We then construct $J_\pm$ with this $\e>0$ and $I_0$ as in the last section. By the construction, 
we have 
\begin{align*}
&J_+(H_0-z)^{-1} -(H-z)^{-1}J_+ =(H-z)^{-1} G_+ (H_0-z)^{-1},  \\
&(H_0-z)^{-1} J_-^* -J_-^* (H-z)^{-1} =(H_0-z)^{-1}G_-^* (H-z)^{-1}
\end{align*}
for $z\in\co\setminus\re$. Combining them, we obtain 
\begin{align}
&J_-^* (H-\l- i0)^{-1} J_+ = J_-^* J_+ (H_0-\l-i0)^{-1} \nonumber \\
&\qquad - (H_0-\l-i0)^{-1} J_-^* G_+(H_0-\l-i0)^{-1} \nonumber \\
&\qquad - (H_0-\l-i0)^{-1} G_-^* (H-\l-i0)^{-1} G_+ (H_0-\l-i0)^{-1}. \label{eq-decomp}
\end{align}
By Corollary~4.4, we learn 
\begin{description}
\item[(C-1)]The wave front set of the distribution kernel of $J_-^* J_+ (H_0-\l-i0)^{-1}$ is a subset 
of $\S^0\cup \S_+^1$. 
\end{description}

Using \eqref{eq-G-remainder} and Corollary~4.4 again, we learn 
\begin{description}
\item[(C-2)] $\Op(\i_1) (H_0-\l-i0)^{-1} J_-^*$ maps $C^\infty$ functions to $C^\infty$ functions, 
\item[(C-3)] $G_+ (H_0-\l-i0)^{-1} \Op(\i_2)$ is smoothing,
\end{description}
provided $\i_1$ and $\i_2$ are supported in sufficiently small conic neighborhoods 
of $(x_1,\x_1)$ and $(x_2,\x_2)$, respectively. Thus we obtain 
\begin{description}
\item[(C-4)] $\Op(\i_1) (H_0-\l-i0)^{-1} J_-^* G_+(H_0-\l-i0)^{-1}\Op(\i_2)$ is smoothing.
\end{description}

Similarly, we have 
\begin{description}
\item[(C-5)] $\Op(\i_1) (H_0-\l-i0)^{-1} G_-^*$ and $G_+(H_0-\l-i0)^{-1} \Op(\i_2)$ are smoothing,
\item[(C-6)] $(H-\l-i0)^{-1}$ is bounded from $H^s(M)$ to $H^{-s}(M)$ with $s>1/2$ by Theorem~\ref{thm-LAP}.
\end{description}
Combining them, we learn 
\begin{description}
\item[(C-7)] $\Op(\i_1) (H_0-\l-i0)^{-1} G_-^* (H-\l-i0)^{-1} G_+ (H_0-\l-i0)^{-1}\Op(\i_2)$ is smoothing. 
\end{description}

From (C-4) and (C-7), we learn 
\begin{description}
\item[(C-8)] $\Op(\i_1) (J_-^* (H-\l- i0)^{-1} J_+-J_-^* J_+ (H_0-\l-i0)^{-1})\Op(\i_2)$ is a smoothing 
operator, and hence its distribution kernel has no wave front set. 
\end{description}
Noting  $(x_1,\x_1)\in\esssupp[\Op(\i_1)J_-^*]$ and $(x_2,\x_2)\in\esssupp[J_+\Op(\i_2)]$, 
we conclude from \eqref{eq-decomp}, (C-1) and (C-8) that $(x_1,\x_1,-x_2,\x_2)\notin\WF[K^+(\l)]$. 

\medskip
\noindent{\bf Case 2:} We now consider the case $p_0(\x_1)\neq \l$, $p_0(\x_2)\neq \l$. 
We use the following energy localization lemma. 

\begin{lem}\label{lem-energy-localization}
Suppose $\chi\in C_0^\infty(M)$, real-valued, and $\supp[\chi]\cap \S_\l=\emptyset$. 
Let $s>1/2$. 
Then there is $Q\in \OPS_{1,0}^0(M)$, with its symbol 
supported in an arbitrarily small neighborhood of $\supp[\chi]$,  such that 
\begin{enumerate}
\renewcommand\labelenumi{{\rm (\theenumi)} }
\item $\chi (H-\l\mp i0)^{-1}-\chi Q$ is bounded from $H^s(M)$ to $C^\infty(M)$;
\item $(H-\l\mp i0)^{-1}\chi -Q\chi$ is bounded from $\mathcal{E}'(M)$ to $H^{-s}(M)$;
\item $\chi (H-\l\mp i0)^{-1}\chi -\chi Q\chi$ is smoothing, i.e., bounded from $\mathcal{E}'(M)$ to
$C^\infty(M)$.  
\end{enumerate}
\end{lem}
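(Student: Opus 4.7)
My plan is to construct $Q$ as a pseudodifferential parametrix for $H-\l$ localised near $\supp[\chi]$, and then translate that parametrix into the three claimed resolvent statements via resolvent identities.

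First I would pick an auxiliary cutoff $\tilde\chi\in C_0^\infty(M)$ with $\tilde\chi=1$ on a neighbourhood of $\supp[\chi]$ and $\supp[\tilde\chi]\cap\S_\l=\emptyset$, so that $|p_0(\x)-\l|\geq c_0>0$ on $\supp[\tilde\chi]$. Setting $q_0(\x)=\tilde\chi(\x)/(p_0(\x)-\l)\in S^0_{1,0}(M)$ gives a first approximation: because $p_0-\l$ depends only on $\x$, $\#$-composition with it is simply pointwise multiplication, so $(H_0-\l)\Op(q_0)=\Op(q_0)(H_0-\l)=\Op(\tilde\chi)$. Standard elliptic iteration then produces lower-order correction terms $q_j$, supported in $\x$ in $\supp[\tilde\chi]$, that progressively annihilate the residual symbols from $V\#q_{j-1}$ and $q_{j-1}\#V$; each iteration gains order because $V\in S^{-\m}_{1,0}(M)$ with $\m>1$. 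Taking an asymptotic sum $q\sim\sum_j q_j$ in $S^0_{1,0}(M)$ and setting $Q=\Op(q)$ produces a single operator that serves as a two-sided parametrix:
\[
(H-\l)Q=\Op(\tilde\chi)+R_1,\qquad Q(H-\l)=\Op(\tilde\chi)+R_2,
\]
with $R_1,R_2\in\OPS^{-\infty}_{1,0}(M)$ whose essential supports lie in $\supp[\tilde\chi]$, and with $\supp[q]$ contained in an arbitrarily small neighbourhood of $\supp[\chi]$.

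Next I would promote these identities to the limiting values of the resolvent. Writing $(H-\l-i\e)Q=\Op(\tilde\chi)+R_1-i\e Q$ and applying $(H-\l-i\e)^{-1}$, then letting $\e\dto 0$ — using that $\l\notin\s_p(H)$ makes $i\e(H-\l-i\e)^{-1}\to 0$ strongly — yields
\[
(H-\l\mp i0)^{-1}\Op(\tilde\chi)=Q-(H-\l\mp i0)^{-1}R_1,
\]
and symmetrically $\Op(\tilde\chi)(H-\l\mp i0)^{-1}=Q-R_2(H-\l\mp i0)^{-1}$, interpreted as operators $H^s(M)\to H^{-s}(M)$ by Theorem~\ref{thm-LAP}. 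Since $\tilde\chi\chi=\chi=\chi\tilde\chi$, multiplying the second identity by $\chi$ on the left gives assertion (1): the remainder $-\chi R_2(H-\l\mp i0)^{-1}$ factors through the smoothing operator $\chi R_2$, whose kernel is $C^\infty$ with compact support on the left, so it carries $H^{-s}(M)$ into $C^\infty(M)$. Multiplying the first identity by $\chi$ on the right gives (2) analogously. Finally (3) follows by combining both, noting that
\[
\chi(H-\l\mp i0)^{-1}\chi-\chi Q\chi=-\chi R_2 Q\chi+\chi R_2(H-\l\mp i0)^{-1}R_1\chi,
\]
in which $R_1\chi$ already carries $\mathcal{E}'(M)$ into $H^s(M)$ and $\chi R_2$ then carries $H^{-s}(M)$ into $C^\infty(M)$.

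The main obstacle is the technical bookkeeping ensuring that smoothing remainders composed with the low-regularity operator $(H-\l\mp i0)^{-1}$ truly yield smoothing maps, rather than merely bounded maps between fixed Sobolev spaces. The key point is the two-sided localisation: $R_1,R_2$ are essentially supported in the compact set $\supp[\tilde\chi]$ in the $\x$-variable, and the cutoff $\chi$ supplies compact support on the opposite side, so each finite-order Sobolev bound from Theorem~\ref{thm-LAP}(3) can be iterated or promoted to the smoothing behaviour required by the statements.
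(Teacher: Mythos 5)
Your proposal is correct and follows the same overall strategy as the paper (build a localized pseudodifferential parametrix for $H-\l$, convert the parametrix identities into resolvent identities via the limiting absorption principle, and conclude that the remainders are smoothing), but the implementation differs in two noteworthy ways. First, the paper constructs only a \emph{one-sided} parametrix with $\chi$ already built in, namely $\chi-(H-\l)Q\chi=R_1$, and obtains the left-sided identity $\chi(H-\l\mp i0)^{-1}-\chi Q=R_1^*(H-\l\mp i0)^{-1}$ for free by taking adjoints, using that $Q$ can be chosen self-adjoint; you instead build a genuine two-sided parametrix with the auxiliary cutoff $\tilde\chi$, which costs a little more iteration but avoids the self-adjointness normalization. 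Second, for assertion (3) the paper does not decompose the remainder algebraically: it instead observes that $R_2=\chi(H-\l\mp i0)^{-1}\chi-\chi Q\chi$ is bounded from $H^s$ to $H^k$ and from $H^{-k}$ to $H^{-s}$ for every $k$, and then interpolates to conclude $R_2$ is smoothing; your substitution of the right-parametrix identity into the left one, writing the remainder as $-\chi R_2 Q\chi+\chi R_2(H-\l\mp i0)^{-1}R_1\chi$, is a direct and arguably more transparent route to the same conclusion, provided one indeed verifies (as you flag at the end) that the smoothing factors $R_1\chi$ and $\chi R_2$ have the requisite compact support in the inner variable so that composition through $(H-\l\mp i0)^{-1}:H^s\to H^{-s}$ lands in $C^\infty(M)$. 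The paper's interpolation step neatly sidesteps this bookkeeping, while your decomposition makes the mechanism more explicit; either is acceptable.
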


\begin{proof}
Since $H-\l$ is elliptic on $\supp[\chi]$, one can construct a parametrix 
$Q\in \OPS_{1,0}^0(M)$ such that 
\[
\chi-(H-\l)Q\chi =R_1
\]
is smoothing. Moreover, we may assume $Q$ is supported in an arbitrarily small neighborhood of 
$\supp[\chi]$, and it is self-adjoint. Then we have 
\[
(H-\l\mp i0)^{-1}\chi -Q\chi =(H-\l\mp i0)^{-1}R_1,
\]
and 
\[
\chi (H-\l\mp i0)^{-1}-\chi Q =R_1^* (H-\l\mp i0)^{-1}. 
\]
The claims (1) and (2) follow from the above expressions and the limiting absorption principle, 
Theorem~\ref{thm-LAP}, respectively. Combining them, we learn 
\[
R_2=\chi (H-\l\mp i0)^{-1}\chi -\chi Q\chi
\]
is bounded from $H^s(M)$ to $H^k(M)$, and also bounded from $H^{-k}(M)$ to $H^{-s}(M)$, 
where $s>1/2$ and $k$ is an arbitrary integer. By interpolation, we conclude that $R_2$ is bounded 
from $H^{-(k-s)/2}(M)$ to $H^{(k-s)/2}(M)$, and this implies $R_2$ is smoothing.
\end{proof}

 We choose $\chi\in C_0^\infty(M)$ so that $\chi(\x_1)=\chi(\x_2)=1$ and 
 $\supp[\chi]\cap \S_\l=\emptyset$. Then by Lemma~\ref{lem-energy-localization} (3), we learn that 
 $\chi (H-\l\mp i0)^{-1}\chi -\chi Q\chi$ is smoothing, and hence the wave front set of the 
 distribution kernel of  $\chi (H-\l\mp i0)^{-1}\chi$ is the same as that of $\chi Q\chi$. 
 Since $Q$ is a pseudodifferential operator, the wave front set of the kernel is contained in $\S^0$. 
 Noting $\chi(\x_1)\chi(\x_2)=1$ and $(x_1,\x_1,-x_2,\x_2)\notin\S^0$, we conclude 
 $(x_1,\x_1,-x_2,\x_2)\notin \WF(K^+(\l))$. 
 
 \medskip
\noindent{\bf Case 3:} Suppose $p_0(\x_1)= \l$ and $p_0(\x_2)\neq \l$. We combine the above 
arguments. We choose $\e>0$ so that $\cos(x_1,v(\x_1))<1-2\e$, and we construct $J_-$ as in 
Case~1. We also choose $\chi\in C_0^\infty(M)$ so that $\chi(\x_2)=1$ and 
$\supp[\chi]\cap\S_\l=\emptyset$ as in Case~2. 
We then choose $\i_1\in S^0_{1,0}(M)$ so that $\i_1(x_1,\x_1)=1$, 
$\supp[\i_1]\subset \{\cos(x,v(\x_1))<1-\e\}$, and 
\[
\supp[\i_1]\cap \bigset{(x,\x)}{\x\in\supp[\chi],x\in T^*_\x M}=\emptyset.
\]
By Lemma~\ref{lem-energy-localization}, we have 
\begin{align*}
\Op(\i_1) J_-^* (H-\l-i0)^{-1}\chi &= \Op(\i_1) (H_0-\l-i0)^{-1} J_-^*\chi \\
& - \Op(\i_1) (H_0-\l-i0)^{-1} G_-^* Q\chi\\
& -\Op(\i_1) (H_0-\l-i0)^{-1} G_-^* (H-\l-i0)^{-1}R,
\end{align*}
where $R$ is a smoothing operator. As in Case~1 and Case~2, we can show that each term in the 
right hand side is smoothing. This implies  $(x_1,\x_1,-x_2,\x_2)\notin \WF(K^+(\l))$. 

\medskip
\noindent{\bf Case 4:} The case $p_0(\x_1)\neq \l$ and $p_0(\x_2)= \l$ is handled similarly to Case~3, 
and we omit the detail. 
\end{proof}

%%%%%%%%%%%%%%%%%%%%%%%%%%%%%%%%%%%%%%%%%
%%%%%%%%%%%%  Section 5  %%%%%%%%%%%%%%%%
%%%%%%%%%%%%%%%%%%%%%%%%%%%%%%%%%%%%%%%%%

\section{Scattering matrices}

In this section we apply the results of previous sections to the scattering theory. 

\begin{prop}
Let $I_0\Subset I\setminus\s_p(H)$ be as in the previous sections. Then the wave operators 
\[
W_\pm^I =\slim_{t\to\pm\infty} e^{itH} e^{-itH_0} E_I(H_0)
\]
exists and they are complete, i.e., $\Ran W_\pm^I =E_I(H)\mathcal{H}_{ac}(H)$. 
\end{prop}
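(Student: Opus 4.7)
The plan is to handle existence and completeness separately; both are essentially standard two-body short-range facts, and the tools assembled in Sections~2--4 (limiting absorption, Mourre estimate, and the bound $\m>1$) are exactly what is needed.

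For existence of $W_\pm^I$, I would apply Cook's method on the dense set
\[
\mathcal{D} = \bigset{E_I(H_0)f}{f\in C_0^\infty(M),\ \supp f\Subset p_0^{-1}(I)^\circ\subset M_0}.
\]
Because $H_0$ is multiplication by $p_0(\x)$, one has $e^{-itH_0}\f(\x)=e^{-itp_0(\x)}f(\x)$, so via the Fourier transform $\mathcal{F}^*$ this is an oscillatory integral whose stationary set in $\x$ satisfies $x=tv(\x)$, with $|v(\x)|$ bounded away from zero on $\supp f\subset M_0$. Non-stationary phase gives rapid decay outside any conic neighborhood of $\bigset{x=tv(\x)}{\x\in\supp f}$, while on that neighborhood $|x|\sim|t|$, so the symbol bound $|V(x,\x)|\le C\jap{x}^{-\m}$ yields
\[
\bignorm{Ve^{-itH_0}\f}_{L^2}\le C_N\jap{t}^{-\m}+C_N\jap{t}^{-N}
\]
for every $N$, which is integrable since $\m>1$. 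Cook's lemma then gives strong convergence of $e^{itH}e^{-itH_0}\f$ as $t\to\pm\infty$, and density of $\mathcal{D}$ in $E_I(H_0)\mathcal{H}$ finishes existence.

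For completeness I would invoke the Pearson--Belopolskii/Kato--Birman theorem: once the wave operators exist, completeness follows once we know that some difference of spectral projections (or their smooth approximants) of $H$ and $H_0$ on $I$ is compact. The key observation is that $VE_I(H_0)$ is compact: $E_I(H_0)$ is multiplication by $1_I\circ p_0$, supported on the compact set $p_0^{-1}(I)$, and the symbol of $V$ composed with such a compactly $\x$-supported cut-off decays in $x$ and has compact $\x$-support, hence defines a compact operator on $\mathcal{H}$. The resolvent equation $(H-z)^{-1}-(H_0-z)^{-1}=-(H-z)^{-1}V(H_0-z)^{-1}$ together with Weierstrass/Stone--Weierstrass propagates compactness to $\f(H)-\f(H_0)$ for any $\f\in C_c(\re)$ equal to $1$ on $I$, giving that $E_{I_*}(H)-E_{I_*}(H_0)$ is compact for $I_*$ slightly smaller than $I$ and avoiding the finitely many eigenvalues in $I$ from Theorem~\ref{thm-LAP}(1). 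This is enough to apply Belopolskii--Birman and conclude $\Ran W^I_\pm=E_I(H)\mathcal{H}_{ac}(H)$.

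The main obstacle is mild: the existence argument has to be formulated on a manifold rather than on $\re^d$, but since $\supp f$ is compact one can work in a single chart, and the stationary-phase bound for $e^{-itH_0}f$ there is elementary because $H_0$ is a pure multiplication operator. Alternatively, completeness could be obtained by running Cook's method in reverse on $E_I(H)\mathcal{H}_{ac}(H)$ using the Mourre-based maximal-velocity estimate that underlies Theorem~\ref{thm-LAP}, and invoking the chain rule $\tilde W^I_\pm W^I_\pm=\mathrm{Id}$ on $E_I(H_0)\mathcal{H}$; I would choose whichever route is most consistent with the author's conventions.
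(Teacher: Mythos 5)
Your existence argument matches the paper's: both use the non-stationary-phase localization of $e^{-itH_0}\f$ to the region $|x|\gtrsim \e|t|$ (with $\e<\inf|v(\x)|$ on $\supp\f$), combined with the decay $\jap{x}^{-\m}$ of the symbol of $V$ and Cook's method. That part is fine.

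The completeness argument as written has a genuine gap. You propose to show that $VE_I(H_0)$ (and hence a difference of spectral cut-offs of $H$ and $H_0$) is \emph{compact} and then invoke Kato--Birman/Belopolskii--Birman. But compactness of the perturbation is not sufficient for completeness of wave operators; the trace-class theory requires a trace-class (not merely compact) condition, and compactness alone does not even guarantee existence in general (Pearson's counterexample). To run the trace-class route you would need something like $(H+i)^{-n}-(H_0+i)^{-n}\in\mathcal{S}_1$; under the hypotheses here, $V$ is a $\Psi$DO of order $-\m$ with $\m>1$, and even after the compact spectral localization in $\x$ this gives a symbol decaying like $\jap{x}^{-\m}$ with $\m>1$ --- which is trace class only when $\m>d$, not for the stated range $\m>1$. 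So the argument fails already at the first step.

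The paper instead concludes completeness from the limiting absorption principle (Theorem~\ref{thm-LAP}) together with smooth perturbation theory: the LAP on $I\setminus\s_p(H)$ gives local $H$- and $H_0$-smoothness of weights $\jap{D_\x}^{-s}$, $s>1/2$, and one factors $V=A^*B$ with $A$ $H$-smooth and $B$ $H_0$-smooth. This is the Kato-smoothness route to completeness, which only requires the $O(\jap{x}^{-\m})$, $\m>1$, short-range decay and is exactly what the $\m>1$ hypothesis is calibrated for. Your final sentence gestures at something in this spirit (``Mourre-based maximal-velocity estimate''), and that direction is viable, but it is a different argument from the Kato--Birman one you actually wrote out, which would need to be replaced.
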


\begin{proof}
This is a standard argument, and we recall it briefly for completeness. 
Let $\f\in C_0^\infty(p_0^{-1}(I))$. Then by the non-stationary phase method, we learn that for any 
$N\in\na$, 
\[
\Bignorm{\chi\bigpare{-D_\x/\e t} e^{-itH_0} \f} \leq C_N \jap{t}^{_N}, \quad t\in\re, 
\]
where $\chi\in C_0^\infty(\re^d)$ is a smooth cut-off function 
such that $\chi(x)=1$ on $\{|x|\leq 1/2\}$ and $\supp[\chi]\subset \{|x|\leq 1\}$, and 
\[
0<\e<\inf\bigset{|v(\x)|}{\x\in\supp[\f]}.
\]
The existence of the wave operators follows easily by this and the Cook-Kuroda method. 
The completeness follows from the limiting absorption principle, Theorem~\ref{thm-LAP}, combined with, 
for example, the smooth perturbation theory (see, e.g., Reed-Simon \cite{RS} Section~VIII.7). 
\end{proof}

Then the scattering operator is defined by $S^I =(W_+^I)^*W_-^I$, and it is unitary on 
$E_I(H_0)\mathcal{H}= L^2(p_0^{-1}(I),m)$. Now $L^2(p_0^{-1}(I),m)$ is decomposed to 
\[
L^2(p_0^{-1}(I),m) \cong \int_I^\oplus L^2(\S_\l,m_\l)d\l,
\]
and the identification is given by the standard trace operator:
\[
T(\l)\f(\x) =\f(\x), \quad \text{for }\x\in\S_\l, \ \f\in H^s(M),
\]
where $s>1/2$. Since $S^I$ commutes with $H_0$, it is decomposed to a family of operators 
$\bigset{S(\l)\text{ on }L^2(\S_\l,m_\l))}{\l\in I}$ such that  
\[
S(\l) T(\l)\f = T(\l) S^I \f \quad \text{for } \f\in H^s(p_0^{-1}(I)).
\]

\begin{lem}\label{lem-energy-trace}
Let $T(\l)$ as above. Then
\[
T(\l)^* T(\l) =\d(H_0-\l) :=\frac1\pi \Im \bigbrac{(H_0-\l-i0)^{-1}}
\]
\end{lem}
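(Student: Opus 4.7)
The plan is to verify the identity by testing both sides against smooth compactly supported functions and reducing to a Sokhotski--Plemelj computation together with the coarea-type definition of $m_\l$.

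First I would fix $\f,\g\in H^s(M)$ with $s>1/2$ (it actually suffices to take $\f,\g\in C_0^\infty(M)$ and extend by density/continuity). By the definition of the trace operator $T(\l)\f(\x)=\f(\x)$ for $\x\in\S_\l$, one has
\[
\langle T(\l)^*T(\l)\f,\g\rangle_{L^2(M,m)}
=\langle T(\l)\f, T(\l)\g\rangle_{L^2(\S_\l,m_\l)}
=\int_{\S_\l}\f(\x)\overline{\g(\x)}\,dm_\l(\x).
\]
So the task reduces to showing that the right-hand side equals $\langle\tfrac1\pi\Im(H_0-\l-i0)^{-1}\f,\g\rangle$.

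Next I would use the fact that $H_0$ is the multiplication operator by $p_0$, so $(H_0-\l-i0)^{-1}$ is multiplication by $(p_0(\x)-\l-i0)^{-1}$; hence
\[
\Bigjap{\tfrac1\pi\Im\bigbrac{(H_0-\l-i0)^{-1}}\f,\g}
=\int_M \tfrac1\pi\Im\bigbrac{(p_0(\x)-\l-i0)^{-1}}\,\f(\x)\overline{\g(\x)}\,dm(\x).
\]
By the Sokhotski--Plemelj formula applied pointwise in $\x$ (valid as a distribution in $\l$, and in fact continuously in $\l\in I$ since $dp_0\neq 0$ on $p_0^{-1}(I)$), we have
\[
\tfrac1\pi\Im\bigbrac{(p_0(\x)-\l-i0)^{-1}}=\d(p_0(\x)-\l).
\]
Substituting into the integral gives $\int_M \d(p_0(\x)-\l)\,\f(\x)\overline{\g(\x)}\,dm(\x)$.

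The final step identifies this with $\int_{\S_\l}\f\bar\g\,dm_\l$ using the very definition of $m_\l$. By construction $m_\l=i^*\tilde m_\l$ with $\tilde m_\l\wedge dp_0=m$; this is precisely the statement of the coarea/layer-cake formula, that for any $u\in C_0^\infty(M)$ and $F\in C_0^\infty(\re)$,
\[
\int_M F(p_0(\x))u(\x)\,dm(\x)=\int_\re F(\l)\Bigpare{\int_{\S_\l}u(\x)\,dm_\l(\x)}d\l,
\]
so that pairing against $u=\f\bar\g$ and a test $F$ in $\l$ yields $\int_M\d(p_0-\l)u\,dm=\int_{\S_\l}u\,dm_\l$. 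Combining the three computations gives the identity of quadratic forms on $H^s(M)\times H^s(M)$, hence the claimed operator identity $T(\l)^*T(\l)=\d(H_0-\l)$.

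There is no serious obstacle; the only thing to be careful about is that both sides be understood as bounded operators $H^s(M)\to H^{-s}(M)$ depending continuously on $\l\in I$, so that the distributional equality in $\l$ obtained from the coarea identity upgrades to a pointwise one. This continuity on the left-hand side follows from the standard trace theorem (using $dp_0\neq 0$ on $p_0^{-1}(I)$), and on the right-hand side from Theorem~\ref{thm-LAP} applied to $H_0$ (or, more directly, from the explicit multiplicative form of the free resolvent).
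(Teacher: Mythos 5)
Your proof is correct and rests on the same three ingredients as the paper's: (i) $T(\l)^*T(\l)$ is the quadratic form $u\mapsto\int_{\S_\l}u\,dm_\l$, (ii) the Sokhotski--Plemelj identity $\tfrac1\pi\Im(p_0(\x)-\l-i0)^{-1}=\d(p_0(\x)-\l)$, and (iii) the normalization $\tilde m_\l\wedge dp_0=m$. The presentations differ: the paper passes to local coordinates in which $\S_\l=\{\x_1=0\}$, computes the distribution-valued adjoint explicitly as $T(\l)^*\g(\x)=\g(\x')v(\x')^{-1}\d(\x_1)$, and then identifies $T(\l)^*T(\l)$ with $\d(p_0(\x)-\l)$ by the change-of-variables formula for the Dirac measure, so that the Jacobian factor $v(\x')^{-1}$ is made visible. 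You instead argue by duality, never writing out $T(\l)^*$ in coordinates, and invoke the coarea identity $\int_M F(p_0)u\,dm=\int F(\l)\bigpare{\int_{\S_\l}u\,dm_\l}d\l$ to match the two quadratic forms. Your version is a little more invariant and arguably cleaner; the paper's local computation is more explicit and doubles as the template for the proof of Lemma~\ref{lem-sym}, which is why the paper does it that way. Either route is fine, and your closing remark about interpreting the distributional-in-$\l$ identity as a pointwise identity via continuity from the limiting absorption principle is exactly the right caveat.
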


\begin{proof}
By a partition of unity and a change of coordinates, we may assume 
$M=\re^d$, $\S_\l=\bigset{(0,\x')}{\x'\in\re^{d-1}}$, and consider the operators in a small neighborhood 
of $0\in\re^d$. We denote the velocity on $\S_\l$ by $dp_0(0,\x')=(v(\x'),0,\dots,0)$, 
the densities on $M$ and $\S_\l$ by 
$m=m(\x_1,\x')d\x_1d\x'$ and $m_\l =m_\l(\x')d\x'$, respectively. By the normalization of $m_\l$, we have 
\[
m(0,\x')=m_\l(\x')v(\x'), \quad \x'\in\re^{d-1}. 
\] 
We now compute the operator $T(\l)^*$: For $\f\in C_0^\infty(\re^d)$ and $\g\in C_0^\infty(\re^{d-1})$, 
\[
(\f,T(\l)^*\g)_M =(T(\l)\f,\g)_{\S_\l} = \int_{\S_\l} \f(0,\x')\overline{\g(\x')} m_\l(\x') d\x'.
\]
Hence, we have 
\[
T(\l)^*\g(\x)= \frac{m_\l(\x')}{m(0,\x')} \g(\x')\d(\x_1) = \frac{\g(\x')}{v(\x')}\d(\x_1).
\]
This implies 
\[
T(\l)^*T(\l)\f(\x) = {v(\x')}^{-1} \d(\x_1)\f(0,\x'), \quad \f\in C_0^\infty(\re^d).
\]
On the other hand, by the change of variables for distributions, we learn 
\[
\d(H_0-\l) =\d(p_0(\x)-\l) = v(\x')^{-1} \d(\x_1)
\]
and these completes the proof. 
\end{proof}

Let $J_\pm$ be the Isozaki-Kitada modifiers constructed in the previous section with $0<\e<1$. Then the following formula is 
well-known. 

\begin{prop}\label{prop-S-matrx-rep}
For $\l\in I\setminus \s_p(H)$, 
\begin{equation}\label{eq-s-matrix-rep}
S(\l) =-2\pi i T(\l) J_+^* G_- T(\l)^* 
+ 2\pi i T(\l) G_+^* (H-\l-i0)^{-1} G_- T(\l)^*.
\end{equation}
\end{prop}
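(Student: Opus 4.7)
The plan is to derive \eqref{eq-s-matrix-rep} by the standard Isozaki–Kitada stationary method, restricting to the energy shell via $T(\l)$.

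First, I would show the modifier representation
\[
W_\pm^I = \slim_{t\to\pm\infty} e^{itH}J_\pm e^{-itH_0}E_I(H_0),
\]
which reduces to $\|(I-J_\pm)e^{-itH_0}\f\|\to 0$ for $\f$ in a dense subspace of $E_I(H_0)\mathcal{H}$. By the non-stationary phase argument already used in the proof of the existence of wave operators earlier in this section, $e^{-itH_0}\f$ is microlocalized near $\{x\approx tv(\x):\x\in\supp\f\}$ as $|t|\to\infty$, where $\tilde a^\pm \equiv 1$ by \eqref{eq-total-symbol}. Combined with Duhamel's formula $\frac{d}{ds}[e^{isH}J_\pm e^{-isH_0}]=ie^{isH}G_\pm e^{-isH_0}$, this yields (in the Abel-regularized sense)
\[
W_\pm^I = J_\pm E_I(H_0) + i\int_0^{\pm\infty}e^{isH}G_\pm e^{-isH_0}E_I(H_0)\,ds.
\]

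Next I would substitute these expressions into $S^I = (W_+^I)^*W_-^I$ and sandwich by $T(\l)$ and $T(\l)^*$. The key tools are the intertwinings $T(\l)e^{isH_0} = e^{is\l}T(\l)$ and $e^{-isH_0}T(\l)^* = e^{-is\l}T(\l)^*$ (consequences of $H_0 T(\l)^* = \l T(\l)^*$ together with Lemma~\ref{lem-energy-trace}), and the Abel-integral identities
\[
\int_0^\infty e^{is(H-\l)-\e s}\,ds = i(H-\l+i\e)^{-1},\quad \int_{-\infty}^0 e^{is(H-\l)+\e s}\,ds = -i(H-\l-i\e)^{-1},
\]
which convert time integrals into boundary values of resolvents. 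For contributions such as $T(\l)(W_+^I)^*J_-T(\l)^*$, in which $e^{-isH}$ does not meet a $T(\l)^*$ directly, a second Duhamel expansion $e^{-isH}J_- = J_-e^{-isH_0} - i\int_0^s e^{-iuH}G_-e^{-i(s-u)H_0}du$ is needed to expose the intertwining before the energy-shell limit can be taken. Applying the Plemelj identity $(H-\l-i0)^{-1} - (H-\l+i0)^{-1} = 2\pi i\,\d(H-\l)$ to the resulting combinations of boundary resolvents produces the factors $\mp 2\pi i$, and after the boundary contributions $T(\l)J_+^*J_-T(\l)^*$ cancel, the two terms of \eqref{eq-s-matrix-rep} remain.

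The main obstacle is the careful treatment of the Abel-regularized limits: individual time integrals produce $O(\e^{-1})$ divergences as $\e\downarrow 0$, which must cancel in the final combinations to give the finite operators in \eqref{eq-s-matrix-rep}. The rigorous justification relies on the limiting absorption principle (Theorem~\ref{thm-LAP}) to control $(H-\l\mp i0)^{-1}$ as a bounded map between appropriate weighted Sobolev spaces, and on the essential-support bound \eqref{eq-G-remainder} for $g_\pm$ to guarantee that $G_\pm T(\l)^*$ has the regularity needed for the resolvent boundary value to act.
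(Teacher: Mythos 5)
The paper does not actually prove Proposition~\ref{prop-S-matrx-rep}: the author cites Yafaev~\cite{Y1} and Isozaki--Kitada~\cite{IK4} and remarks that the functional-analytic computation there carries over verbatim once Lemma~\ref{lem-energy-trace} is available. Your proposal therefore goes beyond the paper in attempting a derivation from scratch, and the ingredients you list --- the modifier representation $W_\pm^I=\slim e^{itH}J_\pm e^{-itH_0}E_I(H_0)$, the Cook--Kuroda/Duhamel identity, the intertwining relations $T(\l)e^{isH_0}=e^{is\l}T(\l)$, Abel regularization producing $(H-\l\mp i0)^{-1}$, and the Sokhotski--Plemelj identity --- are indeed the standard ones behind Yafaev's stationary representation.

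That said, one step of the outline is not right as stated and would need to be replaced. You claim that ``the boundary contributions $T(\l)J_+^*J_-T(\l)^*$ cancel,'' leaving the two terms of \eqref{eq-s-matrix-rep}. There is no such cancellation: once you substitute $W_\pm = J_\pm - \int (H-\mu\mp i0)^{-1}G_\pm\, dE_\mu(H_0)$ into $S=W_+^*W_-$ and go to the fiber at $\l$, the $J_+^*J_-$ contribution does not vanish or drop out by itself. What actually happens in the Yafaev/Isozaki--Kitada computation is that the $J_+^*J_-$ piece is \emph{absorbed} into the two displayed terms through the identities $T(\l)(H_0-\l)=0$ and $(H_0-\l)T(\l)^*=0$, which recast $J_+^*J_-$-type expressions in the form $T(\l)J_+^*(H-\l)J_-T(\l)^*$ and hence in the form $T(\l)J_+^*G_-T(\l)^*$, etc. In particular no separate term $T(\l)J_+^*J_-T(\l)^*$ survives in \eqref{eq-s-matrix-rep}, but this is because of the algebra of the energy-shell trace, not because a boundary term cancels. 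You should also be careful that the identity $T(\l)G_+^* = T(\l)J_+^*(H-\l)$, while valid on smooth compactly supported inputs, cannot be applied naively to $\phi=(H-\l-i0)^{-1}G_-T(\l)^*\g$ (which lies only in $H^{-s}$); the microlocal resolvent estimate of Section~4 is exactly what makes $G_+^*\phi$ smooth, and hence $T(\l)G_+^*\phi$ meaningful, even though $T(\l)J_+^*(H-\l)\phi$ is not. This domain issue is precisely what prevents the two terms of \eqref{eq-s-matrix-rep} from trivially cancelling each other, and it is also the point that needs the most care in a rigorous write-up.

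In short: the proposed route is the standard one and is essentially what the cited references carry out, but the specific claim about a cancellation of $T(\l)J_+^*J_-T(\l)^*$ is incorrect, and the Abel-regularized bookkeeping needs to be organized around the identities $T(\l)(H_0-\l)=0$ together with the microlocal mapping properties of $G_\pm$ rather than around a putative boundary-term cancellation.
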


For the proof, we refer Yafaev \cite{Y1}, and a corresponding formula in Isozaki-Kitada \cite{IK4} 
is essentially equivalent. The proof is functional analytic, and the computation can be carried out 
without any changes under our setting with the help of Lemma~\ref{lem-energy-trace}.  
In order to compute the right hand side terms, we use the 
following lemma. 

\begin{lem}\label{lem-sym}
Let $M$ be a manifold with a smooth density $m$, and let $\L$ be a smooth submanifold of 
codimension one with a smooth density $\tilde m$. 
Let $T$ be the trace operator to $\L$ : $H^s(M,m)\to L^2(\L,\tilde m)$, where $s>1/2$.
We denote the normal vector at $\x\in \L$ by $n(\x)\in T^*_\x M$ normalized so that 
$m=\hat m\wedge n$, where $\hat m\in\bigwedge^{d-1}(M)$, $i^* \hat m=\tilde m$, 
and $i\,:\,\L\hookrightarrow M$ is the embedding. 
\begin{enumerate}
\renewcommand\labelenumi{{\rm (\theenumi)} }
\item Suppose $a\in S^{-\m}_{1,0}(M)$ with $\m>1$. Then $T\Op(a) T^*$ is a pseudo\-differential operator
with its symbol in $S^{-\m+1}_{1,0}(\L)$. 
\item Suppose $a\in S^k_{1,0}(M)$, $k\in\re$, and suppose 
\[
\esssupp[a]\cap \bigset{(\pm n(\x),\x)}{\x\in\L}=\emptyset. 
\]
Then $T\Op(a) T^*$ is a pseudodifferential operator with its symbol in $S^{k+1}_{1,0}(\L)$. 
\item Suppose either the condition of (1) or (2) is satisfied. Then the principal symbol of 
$T\Op(a)T^*$ is given by 
\begin{equation}\label{eq-rest-formula}
\tilde a(x,\x)= \frac{1}{2\pi}\int_{-\infty}^\infty a(x+tn(\x),\x) dt, \quad \x\in\L, x\in T^*_\x\L.
\end{equation}
\end{enumerate}
\end{lem}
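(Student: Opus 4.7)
The plan is to reduce to a local calculation in coordinates adapted to $\L$. By a partition of unity I would work in charts in which $\L=\{\x_1=0\}$, writing $m=m(\x_1,\x')\,d\x_1d\x'$ and $\tilde m=\tilde m(\x')\,d\x'$. The normalization $m=\hat m\wedge n$ with $i^*\hat m=\tilde m$ then forces $n(\x)=n_1(\x')\,d\x_1$ with $n_1(\x')=\pm m(0,\x')/\tilde m(\x')$, and the trace adjoint is computed exactly as in Lemma~\ref{lem-energy-trace}:
\[
T^*g(\x)=\frac{\tilde m(\x')}{m(0,\x')}\,g(\x')\,\d(\x_1).
\]
Substituting this into the oscillatory integral for $\Op(a)$ and setting $\x_1=0$, the factor $\d(\y_1)$ collapses the $\y_1$-integration while $x_1$ becomes a free parameter, producing
\[
T\Op(a)T^*g(\x')=(2\pi)^{-(d-1)}\iint e^{-i(\x'-\y')\cdot x'}\,b(x',\y')\,g(\y')\,dx'\,d\y'
\]
with $b(x',\y')=\frac{\tilde m(\y')}{2\pi\,m(0,\y')}\int a(x_1,x',0,\y')\,dx_1$. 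The change of variable $x_1=t\,n_1(\y')$ then converts $b$ into the expression \eqref{eq-rest-formula}, and coordinate independence of the right hand side follows because shifts of $x\in T^*_\x\L$ by multiples of $n(\x)$ are absorbed by translation of the $t$-variable in the integral.

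For the symbol estimates in case (1), differentiating under the integral gives $|\pa_{x'}^\a\pa_{\y'}^\b b(x',\y')|\lesssim\int\jap{(x_1,x')}^{-\m-|\a|}\,dx_1\lesssim\jap{x'}^{-\m+1-|\a|}$, since $\m>1$ guarantees integrability in $x_1$; this places $b$ in $S^{-\m+1}_{1,0}(\L)$. For case (2), I would split $a=a_1+a_2$ with $a_1\in S^{-\infty}$ and $a_2$ supported outside a small conic neighborhood of $\{(\pm n(\x),\x):\x\in\L\}$. In the local coordinates above, this neighborhood takes the form $\{|x'|<\e|x_1|\}\setminus\{|x|\le C\}$, so for $a_2$ the effective $x_1$-range at a given $x'$ is $|x_1|\lesssim\jap{x'}/\e$, while the integrand is bounded by $\jap{(x_1,x')}^k\lesssim\jap{x'}^k$; one obtains $|b|\lesssim\jap{x'}^{k+1}$, and derivative bounds yielding $b\in S^{k+1}_{1,0}(\L)$ follow identically, while the $a_1$ contribution produces a smoothing remainder.

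The main obstacle will be handling case (2) cleanly: one must translate the conic essential support condition in $T^*M$ into a workable non-conic support statement for $x_1\mapsto a(x_1,x',0,\y')$, uniformly in $\y'$, and verify that the conic neighborhood can be chosen independently of $\y'$ over a single chart. Modulo this bookkeeping, the identification of the principal symbol through \eqref{eq-rest-formula} is a direct computation in local coordinates, and the global statement follows by a standard partition-of-unity gluing argument together with the usual reduction of lower-order contributions.
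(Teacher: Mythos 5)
Your argument reproduces the paper's proof: the same reduction by partition of unity to local coordinates where $\L=\{\x_1=0\}$, the same trace-adjoint formula $T^*g=\frac{\tilde m(\x')}{m(0,\x')}g(\x')\d(\x_1)$ collapsing the $\y_1$-integral, the same change of variable $x_1=t\,n(\y')$ to obtain \eqref{eq-rest-formula}, the identical integrability estimate for case (1), and the identical translation of the conic essential-support condition into $|x_1|\lesssim|x'|/\e$ on $\supp a$ for case (2). The proposal is correct.
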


\begin{rem}
We note that $x+tn(\x)$ in \eqref{eq-rest-formula} is not necessarily well-defined, since 
there is no canonical embedding of $T^*_\x\L$ into $T^*_\x M$. However, the kernel of 
the canonical projection: $i^*\,:\,T^*_\x M\to T^*_\x\L$ is spanned by the normal vector $n(\x)$, 
and the integral in  \eqref{eq-rest-formula} is invariant under the translation: 
$(x,\x)\mapsto (x+sn(\x),\x)$, $s\in\re$. Hence $\tilde a(x,\x)$ is well-defined as a function on 
$T^*\L$. If $M$ is equipped with a Riemannian metric, we can naturally identify $T_\x^*\L$
with the normal subspace $\{n(\x)\}^\perp\subset T^*_\x M$, and the definition is simpler. 
\end{rem}

\begin{proof}
By a partition of unity and a change of coordinates, we may assume 
$M=\re^d$, $\L=\bigset{(0,\x')}{\x'\in\re^{d-1}}$, and $a$ is supported in a small neighborhood 
of $0\in\re^d$. We denote the normal vector by $(n(\x'),0,\dots,0)$, the densities on $M$ and $\L$ by 
$m=m(\x_1,\x')d\x_1d\x'$ and $\tilde m =\tilde m(\x')d\x'$, respectively. 
Analogously to the proof of Lemma~\ref{lem-energy-trace}, we have 
\[
T^*\g(\x)= \frac{\tilde m(\x')}{m(0,\x')} \g(\x')\d(\x_1) = \frac{\g(\x')}{n(\x')}\d(\x_1).
\]
We now compute $T\Op(a) T^*$ in the local coordinate: 
\begin{align*}
&T\Op(a)T^*\f(\x') = (2\pi)^{-d} \iint e^{-i((0,\x')-\y)\cdot x} a(x_1,x',\y_1,\y')
\frac{\f(\y')}{n(\y')}\d(\y_1)d\y dx\\
&\quad= (2\pi)^{-(d-1)}\iint e^{-i(\x'-\y')\cdot x'}
\biggpare{\frac{1}{2\pi}\int_{-\infty}^\infty a(x_1,x',0,\y')\frac{dx_1}{n(\y')}} \f(\y') d\y' dx' \\
&\quad = (2\pi)^{-(d-1)}\iint e^{-i(\x'-\y')\cdot x'}
\biggpare{\frac{1}{2\pi}\int_{-\infty}^\infty a(tn(\y'),x',0,\y')dt} \f(\y') d\y' dx'.
\end{align*}
The last expression proves \eqref{eq-rest-formula} for a suitable symbol $a(x,\x)$. 
We can show, by direct computations, that $\tilde a\in S^{1-\m}_{1,0}(\L)$ if 
$a\in S^{-\m}_{1,0}(M)$, $\m>1$. 
If $a\in S^k_{1,0}(M)$ satisfies the condition in (2), then in the local coordinate, we have 
\[
\esssupp[a]\subset \bigset{(x_1,x',\x_1,\x')}{\x\in\L, |x'|>\e |x_1|}
\]
with some $\e>0$.
Then it is straightforward to verify $\tilde a\in S^{k+1}_{1,0}(\L)$, and we can justify the 
above argument. 
\end{proof}

\begin{proof}[Proof of Theorem~\ref{thm-main}]
We recall the symbols of $J_\pm$ are given by \eqref{eq-total-symbol}. 
We denote
\[
Y(x,\x) =\y(-\cos(x,v(\x)), \quad \x\in M, x\in T^*_\x M.
\]
Then, by straightforward computations, we learn that the principal symbol of $J_+^* G_-$ is given by 
\[
\chi(p_0(\x))^2 e^{-i\g_+}(-i)\{p_0,Y\} e^{i\g_-} 
=-i e^{-i\g(x,\x)} \chi(p_0(\x))^2 v(\x)\cdot\pa_x Y(x,\x), 
\]
where 
\[
\g(x,\x) := \g_+(x,\x) -\g_-(x,\x) =\int_{-\infty}^\infty V(x+tv(\x),\x)dt.
\]
It is easy to see that $\g(x,\x)$ is invariant under the translation: $(x,\x)\mapsto (x+sv(\x),\x)$
for any $s\in\re$. Now we note 
\[
\lim_{t\to+\infty} Y(x+t v(\x),\x) =0, \quad 
\lim_{t\to-\infty} Y(x+t v(\x),\x) =1
\]
for any $\x\in p_0^{-1}(I)$ and $x\in T^*_\x M$. We also note 
\[
\frac{d}{dt} Y(x+tv(\x),\x) =v(\x)\cdot \pa_x Y(x+t v(\x),\x).
\]
Combining these, we have 
\begin{align*}
\int_{-\infty}^\infty v(\x)\cdot\pa_x Y(x+tv(\x),\x) dt 
&=\lim_{T\to\infty} (Y(x+Tv(\x),\x)-Y(x-Tv(\x),\x)) \\
&= -1.
\end{align*}
Since $J_+^* G_-$ is essentially supported away from $\{(\pm v(\x),\x)\}$, we can apply 
Lemma~\ref{lem-sym} (2) to learn that $T(\l)  J_+^* G_-T(\l)^*$ is a pseudodifferential operator and 
its principal symbol is given by  
\[
\frac{-i}{2\pi} \int_{-\infty}^\infty e^{-i\g(x,\x)} v(\x)\cdot \pa_x Y(x+tv(\x),\x) dt 
=\frac{i}{2\pi} e^{-i\g(x,\x)}.
\]
Hence the principal symbol of $-2\pi i T(\l) J^*_+ G_- T(\l)^*$ is given by $e^{-i\g(x,\x)}$. 
The second term in the right hand side of \eqref{eq-s-matrix-rep} is a smoothing operator 
by the microlocal resolvent estimate, 
Corollary 4.2, and we conclude that the principal symbol of $S(\l)$ is given by $e^{-i\g(x,\x)}$
modulo the $S^{-1}_{1,0}(\S_\l)$ terms.  

In order to obtain a better remainder estimate, we use the following trick. 
If $V=0$, then $S(\l)=1$ for all $\l\in I$. This also corresponds to the case $\g(x,\x)=0$ and 
$a^\pm(x,\x) =1$. Now we note 
\[
b^\pm(x,\x) := \tilde a^\pm(x,\x)-\chi(p_0(\x))\y(\pm\cos(x,v(\x))) \in S^{-\m+1}_{1,0}(M), 
\]
and we apply the above argument to $b^\pm(x,\x)$ to conclude that the principal symbol 
of $S(\l)-I$ is given by $e^{-i\g(x,\x)}-1$, and moreover, the remainder is contained in 
$S^{-\m}_{1,0}(\S_\l)$. Thus we conclude that the symbol of $S(\l)$ is $e^{-i\g(x,\x)}$ 
modulo the $S^{-\m}_{1,0}(\S_\l)$ terms. 
\end{proof}

%%%%%%%%%%%%%%%%%%%%%%%%%%%%%%%%%%%%%%%%%
%%%%%%%%%%%%  Section 6  %%%%%%%%%%%%%%%%
%%%%%%%%%%%%%%%%%%%%%%%%%%%%%%%%%%%%%%%%%

\section{Applications to operators on Euclidean spaces}

Here we discuss applications of our main theorem to operators on Euclidean spaces, 
in particular Schr\"odinger type operators. 
In this section we let $M=\re^d$ and $\mathcal{H}=L^2(\re^d)$
with the standard Lebesgue measure. 

\begin{example}
We set $p_0(\x)$ to be a real-valued elliptic polynomial of order $2m$ on $\re^d$, 
and we set $H_0=p_0(D_\x)$ on $L^2(\re^d)$. 
We suppose $V(x,\x)$ is a $2m$-th order polynomial in $\x$ with 
smooth coefficients $\{a_\a(x)\}$, i.e., 
\[
V(x,\x) = \sum_{|\a|\leq 2m} a_\a(x) \x^\a, \quad x,\x\in\re^d.
\]
We suppose $a_\a(x)$ are real-valued and there is $\m>1$ such that for any $\b\in\ze_+^d$, 
\begin{equation}\label{eq-decay-ass}
\bigabs{\pa_x^\b a_\a(x)} \leq C_\b \jap{x}^{-\m-|\b|}, \quad x\in\re^d, \ |\a|\leq 2m.
\end{equation}
We quantize $V$ by 
\[
V= \frac12 \sum_{|\a|\leq 2m} \bigpare{a_\a(x) D_x^\a + D_x^\a a_\a(x)},
\]
then $V$ is an infinitesimally $H_0$-bounded symmetric operator. Hence $H=H_0+V$ 
is a self-adjoint operator. Then we can apply Theorem~1.1 for $\hat H= \mathcal{F} H \mathcal{F}^*$, 
provided $\l\in\re$ is a non-critical value of $p_0(\x)$. Thus the scattering matrix is a pseudodifferential 
operator with the principal symbol: 
\[
s_0(\l;x,\x) =e^{-i\g(\l;x,\x)}, \quad 
\g(\l;x,\x) = \sum_{|\a|\leq 2m} \int_{-\infty}^\infty a_\a(x+tv(\x))\x^\a dt,
\]
where $p_0(\x)=\l$, and $x\in \{v(\x)\}^\perp\cong T^*_\x \S_\l$. 

A typical example is the Schr\"odinger operator, i.e., 
\[
H = -\frac12 \triangle +V(x), 
\]
where $V(x)=a_0(x)$ is supposed to satisfy the condition \eqref{eq-decay-ass}. 
In this case, $v(\x)=\x$ and $\S_\l= \bigset{\x\in\re^d}{\frac12|\x|^2 =\l}$.
Then we recover the X-ray transform type approximation (\cite{BP1,BP2}), i.e., the 
principal symbol of the scattering matrix is given by 
$s_0(\l;x,\x) =e^{-i\g(\l;x,\x)}$, where
\[
\g(\l;x,\x) = \int_{-\infty}^\infty V(x+t\x)dt= \frac{1}{|\x|} \int_{-\infty}^\infty V(x+t\hat\x)dt
\]
with $\x\in \S_\l$, $x\perp \x$, $\hat\x=\x/|\x|$. In particular, $\g(\l;x,\x)$ is homogeneous 
of degree $(-1/2)$ with respect to the energy $\l=|\x|^2/2$. 
\end{example}

\begin{example}
Another typical example is the so-called relativistic Schr\"odinger operator. 
Let $g_{ij}(x)$ be a smooth Riemannian metric on $\re^d$, $W(x)$ be a smooth real-valued 
function,  and $m\geq 0$. We suppose there is $\m>1$ such that for any $\a\in\ze_+^d$, 
\[
\bigabs{\pa_x^\a (g_{ij}(x)-\d_{ij})}\leq C_\a \jap{x}^{-\m-|\a|}, \quad x\in\re^d, 
\]
and
\[
\bigabs{\pa_x^\a W(x)}\leq C_\a \jap{x}^{-\m-|\a|}, \quad x\in\re^d. 
\]
Then we define 
\[
H_0=\sqrt{-\triangle +m^2}, 
\]
and 
\[
H=\biggpare{\sum_{i,j=1}^d D_{x_j} g_{jk}(x) D_{x_k}+m^2}^{1/2} +W(x)
\]
on $\mathcal{H}=L^2(\re^d)$. It is easy to see that $H_0$ and $H$ are self-adjoint with 
$D(H_0)=D(H)=H^1(\re^d)$. 
Then we can show $\hat H_0=\mathcal{F} H_0 \mathcal{F}^{-1}$ and 
$\hat H=\mathcal{F} H \mathcal{F}^{-1}$ satisfy the assumptions of Theorem~\ref{thm-main}, 
and the result holds away from the critical value $\l=0$. 

We note that if $m=0$, then the symbol of $H_0$ is $|\x|$ and it has a singularity are $\x=0$. 
However, we can easily isolate the singularity using energy localization (see, e.g., 
Lemma~\ref{lem-energy-localization}). If, in addition, $g_{ij}(x)=\d_{ij}$, then we have 
\[
H=|D_x|+W(x), \quad \text{and}\quad v(\x)=\hat\x =\frac{\x}{|\x|}.
\]
By Theorem~\ref{thm-main}, the principal symbol of the scattering matrix is given by 
\[
s_0(\l;x,\x) =e^{-i\g(x,\x)}, \quad 
\g(x,\x) = \int_{-\infty}^\infty W(x+t\hat\x)dt ,
\]
where $|\x|=\l$, $x\perp \x$. We note these symbols are actually independent of the energy $\l>0$. 
\end{example}

%%%%%%%%%%%%%%%%%%%%%%%%%%%%%%%%%%%%%%%%%
%%%%%%%%%%%%  Section 7  %%%%%%%%%%%%%%%%
%%%%%%%%%%%%%%%%%%%%%%%%%%%%%%%%%%%%%%%%%

\section{Applications to discrete Schr\"odinger operators}

In this section we discuss applications of our result to operators on 
the lattice $\ze^d$. We consider the Fourier space, or the dual group, $\torus^d$ as our 
configuration space, where $\torus=\re/(2\pi \ze)$. 

Let $\hat H_0$ be a self-adjoint translation invariant (i.e., constant coefficients) finite difference 
operator on $\ell^2(\ze^d)$, and let $V(n)$ ($n\in\ze^d$) be a multiplication operator on $\ze^d$. 
We consider scattering theory for the pair 
\[
\hat H_0\quad\text{and} \quad \hat H=\hat H_0+V\quad\text{on $\ell^2(\ze^d)$}.
\]
We denote the discrete Fourier transform by 
\[
F\f(\x) =(2\pi)^{-d/2}\sum_{n\in\ze^d} e^{-in\cdot \x} \f(n), \quad \x\in\torus^d, 
\]
which is unitary from $\ell^2(\ze^d)$ to $L^2(\torus^d)$. We denote the symbol of 
$\hat H_0$ by $p_0(\x)$, i.e., 
\[
p_0(\x) =(2\pi)^{d/2} F(\hat H_0 \d_0), 
\quad \text{where }\d_0(n)= \prod_{j=1}^d \d_{n_j0}\in\ell^2(\ze^d).
\]
By the self-adjointness of $\hat H_0$, $p_0$ is a real-valued trigonometric polynomial. We write
\[
H_0=F \hat H_0 F^* \quad \text{on }\mathcal{H}=L^2(\torus^d),
\]
and it is the multiplication operator by $p_0(\x)$. 

Now we denote the directional difference operators by 
\[
\tilde \pa_j \f(n) = \f(n)-\f(n-e_j), \quad n\in\ze^d, j=1,\dots, d, 
\]
where $(e_1,\dots,e_d)\subset \ze^d$ is the standard basis of $\re^d$. 
On the potential, we suppose:

\begin{ass}
$V\in\ell^\infty(\ze^d)$, real-valued, and there is $\m>1$ such that for any $\a\in\ze_+^d$,
\[
\bigabs{\tilde\pa^\a V(n)} \leq C_\a \jap{n}^{-\m-|\a|}, \quad n\in\ze^d, 
\]
with some $C_\a>0$. 
\end{ass}

Under this assumption, we can show that $V$ is extended to a real-valued smooth function 
$\tilde V$ on $\re^d$ such that for any $\a\in\ze_+^d$ 
\[
\bigabs{\pa_x^\a \tilde V(x)} \leq C_\a \jap{x}^{-\m-|\a|}, \quad x\in\re^d, 
\]
with some $C_\a>0$ (see, e.g., \cite{N2}, Lemma~2.1). 
We denote the standard Fourier transform on $\re^d$ by $\mathcal{F}$, and we let 
$\tilde V(-D_\x)= \mathcal{F} \tilde V(\cdot) \mathcal{F}^*$
be a Fourier multiplier on $\re^d$. 

\begin{lem}\label{lem-multipliers}
We identify $\torus^d \cong [-\pi,\pi)^d$, and let $\chi\in C_0^\infty((-\pi,\pi)^d)$. Then there 
is a smoothing operator $K$ on $\torus^d$ such that 
\[
\chi F V F^* \f = \chi \tilde V(-D_\x) \f +K\f, \quad \f\in C_0^\infty((-\pi,\pi)^d).
\]
Namely, $FVF^*$ and $\tilde V(-D_\x)$ coincide on $(-\pi,\pi)^d$ modulo the smoothing 
operators. 
\end{lem}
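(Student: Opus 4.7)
The plan is to write $FVF^*$ acting on functions supported strictly inside $(-\pi,\pi)^d$ as the $2\pi\ze^d$-periodization of $\tilde V(-D_\xi)$ via the Poisson summation formula, and then to show that the ``aliasing'' terms with $k\neq 0$ form a smoothing operator because $\tilde V$ is a symbol of order $-\mu$.

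First, for $\phi\in C_0^\infty((-\pi,\pi)^d)$, regarded as a Schwartz function on $\re^d$ by zero extension, the Fourier-series coefficients $(F^*\phi)(n)$ coincide with the continuous Fourier transform values $(\mathcal{F}^*\phi)(n)$ for every $n\in\ze^d$, since the two integrals agree. Using $V(n)=\tilde V(n)$ this gives
\[
(FVF^*\phi)(\xi) = (2\pi)^{-d/2}\sum_{n\in\ze^d} h(n)\,e^{-in\cdot\xi},\qquad h(x):=\tilde V(x)(\mathcal{F}^*\phi)(x).
\]
The function $h$ is Schwartz because $\tilde V$ is a smooth multiplier on $\mathcal{S}(\re^d)$, so the Poisson summation formula yields
\[
(FVF^*\phi)(\xi) = \sum_{k\in\ze^d}\mathcal{F}h(\xi+2\pi k) = \sum_{k\in\ze^d}(\tilde V(-D_\xi)\phi)(\xi+2\pi k).
\]
Splitting off the $k=0$ contribution, the candidate remainder
\[
K\phi(\xi) := \chi(\xi)\sum_{k\neq 0}(\tilde V(-D_\xi)\phi)(\xi+2\pi k)
\]
makes the identity of the lemma hold by construction.

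To check that $K$ is smoothing I would examine its Schwartz kernel on $\torus^d\times\torus^d$,
\[
K(\xi,\eta) = \chi(\xi)(2\pi)^{-d/2}\sum_{k\neq 0}\hat{\tilde V}(\xi-\eta+2\pi k).
\]
The decay $|\pa^\beta\tilde V(x)|\leq C_\beta\jap{x}^{-\mu-|\beta|}$ combined with standard integration by parts gives $\hat{\tilde V}\in C^\infty(\re^d\setminus\{0\})$ with $|\pa_\xi^\beta\hat{\tilde V}(\xi)|\leq C_{\beta,N}(1+|\xi|)^{-N}$ on $\{|\xi|\geq\delta\}$ for every $N$. Since $\supp\chi$ is compactly contained in $(-\pi,\pi)^d$, a simple geometric check yields $|\xi-\eta+2\pi k|_\infty\geq\delta>0$ uniformly for $\xi\in\supp\chi$, $\eta\in[-\pi,\pi]^d$ and $0\neq k\in\ze^d$, so each termwise derivative in $(\xi,\eta)$ is dominated by $C_{\alpha,\beta,N}(1+|k|)^{-N}$ and the series converges in $C^\infty(\torus^d\times\torus^d)$.

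The main point I expect to require care is the harmonic-analysis estimate for $\hat{\tilde V}$: we only assume $\mu>1$, so $\tilde V$ need not be integrable in high dimensions. One handles this by splitting $\tilde V=\rho\tilde V+(1-\rho)\tilde V$ with $\rho\in C_0^\infty(\re^d)$ equal to $1$ near the origin; the first piece has compact support and hence smooth rapidly decaying Fourier transform, while for the second, repeated integration by parts against $(\xi/|\xi|)\cdot\nabla_x$ converts the symbol estimate into the desired decay of $\hat{\tilde V}$ and its derivatives for $|\xi|\geq\delta$, completing the verification.
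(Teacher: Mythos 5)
Your proof follows the same route as the paper: identify $F V F^{*}$ with the $2\pi\ze^{d}$-periodization of $\tilde V(-D_{\xi})$ via the Poisson summation formula, split off the $k=0$ term, and observe that the aliasing terms form a smooth kernel because the shifts $\xi-\eta+2\pi k$, $k\neq 0$, stay uniformly away from the origin for $\xi\in\supp\chi\Subset(-\pi,\pi)^{d}$. The only difference is cosmetic (the paper packages the periodization as an operator $\Pi$ and simply asserts the kernel is smooth); you supply the missing integration-by-parts argument for the off-diagonal rapid decay of $\widehat{\tilde V}$, correctly handling the non-integrability when $1<\mu\leq d$, so the proposal is complete and correct.
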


\begin{proof}
We use an operator $\Pi$ : $L^1(\re^d)\to L^1(\torus^d)$ defined by 
\[
(\Pi u)(\x) =\sum_{n\in\ze^d} u(\x+2\pi n), \quad \x\in\torus^d\cong [-\pi,\pi)^d, u\in L^1(\re^d).
\]
Then we have 
\[
\Pi \tilde V(-D_\x)\f(\x) = (2\pi)^{-d} \sum_n \iint e^{-i(\x-\y+2\pi n)\cdot x} \tilde V(x) \f(\y)d\y dx.
\]
By the Poisson summation formula: 
${\displaystyle \sum_{n \in\ze^d}  e^{2\pi i n\cdot x} =\sum_{m\in\ze^d} \d(x-m)}$, we learn 
\begin{align*}
\Pi \tilde V(-D_\x)\f(\x) &= (2\pi)^{-d}\sum_m \iint e^{-i(\x-\y)\cdot x} \tilde V(x) \d(x-m)\f(\y)d\y dx\\
&=(2\pi)^{-d}\sum_m \iint e^{-i(\x-\y)\cdot m} V(m)\f(\y)d\y \\
& =F VF^* \f(\x). 
\end{align*}
On the other hand, we write
\begin{align*}
& \chi \tilde V(-D_\x)\f(\x) -\chi \Pi \tilde V(-D_\x)\f(\x)
= \sum_{m\neq 0} \chi(\x)(\tilde V(-D_\x)\f)(\x+2\pi m)\\
&\quad =(2\pi)^{d/2} \int_{[-\pi,\pi)^d} \sum_{m\neq 0} \chi(\x)(\mathcal{F}\tilde V)(\x-\y+2\pi m)\f(\y)d\y \\
&\quad = \int_{[-\pi,\pi)^d} K(\x,\y) \f(\y) d\y
\end{align*}
with a smooth kernel $K(\x,\y)\in C^\infty((-\pi,\pi)^d\times (-\pi,\pi)^d)$. Thus 
\[
\chi F VF^* \f = \chi \tilde V(-D_\x)\f -\int K(\x,\y) \f(\y) d\y, 
\]
and this completes the proof. 
\end{proof}

We then consider $\tilde V(-D_\x)$ in the sense of pseudodifferential operator on $\torus^d$. 
Then Lemma~\ref{lem-multipliers} implies $\tilde V(-D_\x)$ and $F^*VF$ coincides modulo the smoothing operators,  
and thus we may consider $p(x,\x)= p_0(\x)+\tilde V(x)$ as the symbol of $H$. 
Now we can apply our results, in particular Theorem~\ref{thm-main} to our model. 
We consider more specific examples in the rest of this section. 

\begin{example}[Square lattice]
We consider discrete Schr\"odinger operators with the difference Laplacian: 
\[
\hat H_0\f(n)= \frac12\sum_{|n-m|=1} (\f(n)-\f(m)) \quad\text{for } n\in\ze^d, \, \f\in \ell^2(\ze^d),
\]
and we set $\hat H=\hat H_0+V$, where $V$ satisfies Assumption~A. Then it is easy to show 
\[
p_0(\x)= \sum_{j=1}^d (1-\cos(\x_j)), \quad\x\in\torus^d, 
\]
and hence $\s(H_0)=[0,2d]$. The velocity is given by 
\[
v(\x) =(\sin(\x_1),\dots,\sin(\x_d))\in\re^d, \quad \x\in\torus^d.
\]
We note $v(\x)=0$ if and only if $\sin(\x_j)=0$ for $j=1,\dots, d$. 
These critical points corresponds to the critical values, or the threshold energy sets,
 $\mathcal{T}=\{0,2,\dots, 2d\}$. 

The energy surface $\S_\l$, $\l\in[0,2d]\setminus\mathcal{T}$, is a regular submanifold, and 
it is diffeomorphic to the sphere $\mathbb{S}^{d-1}$, not unlike in the Euclidean space case. 
For $\l\in [0,2d]\setminus (\mathcal{T}\cup\s_p(H))$, 
the scattering matrix $S(\l)$ is defined as a unitary operator on $L^2(\S_\l,m_\l)$, and it is a 
pseudodifferential operator. Moreover, the principal symbol is given by the formula \eqref{eq-Born}
of Theorem~\ref{thm-main}. 
\end{example}

\begin{example}[2D triangular lattice] 
Here we consider 2 dimensional triangular lattice. The configuration space is also $\ze^2$, but the 
free Hamiltonian is given by 
\[
\hat H_0\f(n)= \frac12 \sum_{|n-m|=1} (\f(n)-\f(m)) 
+\frac12\sum_{j=\pm 1}(\f(n)-\f(n_1+j,n_2+j)), 
\]
for $\f\in\ell^2(\ze^2)$. Then the symbol is given by 
\[
p_0(\x)= 3-\cos(\x_1)-\cos(\x_2) -\cos(\x_1+\x_2), \quad \x=(\x_1,\x_2)\in\torus^2.
\]
By direct computations, we learn $v(\x)=0$ if and only if either (1) $\x_1=0,\pi$ and $\x_2=0,\pi$; 
or (2) $\x_1=\x_2=\pm\frac23\pi$. Thus $p_0(\x)$ has six critical points (one minimum, two maxima and 
three saddle points), and the critical values are $\mathcal{T}=\{0,2,\frac92\}$.  
The spectrum is $\s(H_0)=[0,\frac92]$. 

For $\l\in (0,2)$, $\S_\l$ is diffeomorphic to the circle $\mathbb{S}^1$; 
for $\l\in (2,\frac92)$, $\S_\l$ has two connected components, and each is diffeomorphic to $\mathbb{S}^1$. 
The scattering matrix $S(\l)$ is a pseudodifferential operator on such a manifold $\S_\l$ 
if  $\l\in ((0,2)\cup(2,\frac92))\setminus \s_p(H)$. 
\end{example}

%%%%%%%%%%%%%%%%%%%%%%%%%%%%%%%%%%%%%%%%%
%%%%%%%%%%%%  References  %%%%%%%%%%%%%%%%
%%%%%%%%%%%%%%%%%%%%%%%%%%%%%%%%%%%%%%%%%

\end{document}